\newcommand{\R}{\mathbb R}
\newcommand{\C}{\mathbb C}
\newcommand{\D}{\mathbb D}
\newcommand{\q}{{\cal Q}}
\newcommand{\qcb}{{\cal Q}_{cb}}
\newcommand{\pc}{\psi_{_{cb}}}
\title{Some constants related to Numerical Ranges}
\author{Michel {\sc Crouzeix} \footnotemark[2]}
\begin{document}
\maketitle
\renewcommand{\thefootnote}{\fnsymbol{footnote}}
 \footnotetext[2]{Institut de Recherche Math\'ematique de Rennes, UMR CNRS n$^\circ$ 6625 \\
Universit\'e de Rennes 1, 
Campus de Beaulieu, 35042 RENNES Cedex, France\\
{\tt michel.crouzeix@univ-rennes1.fr} }
\begin{keywords}
 numerical range, spectral set, completely bounded
\end{keywords}
\begin{AMS}
 15A60, 47A12, 47A25, 47A63
\end{AMS}
\begin{abstract}
In an attempt to progress towards proving the conjecture {\it the numerical range $W(A)$ is a 2--spectral set for the matrix $A$}, we propose a study of various constants. We review some partial results; many problems are still open. We describe our corresponding numerical tests.

\end{abstract}

\section{Introduction}
 Let us introduce our notation. We denote $\|v\|=(v^*v)^{1/2}$ the usual Euclidean norm of a column vector $v\in \C^d$;
$\|M\|:=\sup\{\|M\,v\|\,; \ v\in \C^n,\ \|v\|=1\}$ is the operator norm of a matrix $M\in \C^{m,n}$. The set $W(A):=\{v^*Av\,; v\in \C^d, \|v\|^2=v^*v=1\}$ is the numerical range 
of $A\in \C^{d,d}$\,; recall that it is a closed convex subset of $\C$ which contains the spectrum $\sigma (A)$ (see for instance \cite{gura,hojo}). The inequality  
 \[
\|A\|\leq 2 \sup_{z\in W(A)}|z|,
\]
is well known. In \cite{crzx}, we have conjectured that its extension 
  \begin{equation}\label{conje}
\|p(A)\|\leq 2 \sup_{z\in W(A)}|p(z)|,
\end{equation}
 also holds for all polynomials $p\in \C[z]$. But, up to now, we have only been able to prove in \cite{crac} that there exists a best constant $\q$, satisfying $2\leq \q\leq 11.08$, such that the inequality
\begin{equation}\label{cst}
\|p(A)\|\leq \q \sup_{z\in W(A)} |p(z)|
\end{equation}
holds for all square matrices $A$ and all polynomials $p$. 
It is remarkable that the inequality (\ref{cst}) admits a completely bounded version\footnote{
Some comments are provided at the end of this section}. More precisely\,:

{\it There exists a best constant  $\qcb$, satisfying $2\leq \q\leq \qcb\leq 11.08$, such that}
\begin{equation}\label{ccb}
\|P(A)\|\leq \qcb \sup_{z\in W(A)} \|P(z)\|,
\end{equation}
\indent $\bullet$ {\it for all square matrices $A\in\C^{d,d}$, for all values of $d$,}

$\bullet$ {\it for all polynomial functions $P\,: \C\to\C^{m,n}$, for all values of $m$ and $n$.}
\smallskip

\noindent Here $P$ is matrix-valued $P(z)=(p_{ij}(z))$, with each entry $p_{ij}\in\C[z]$ being a polynomial\,; the matrix $P(A)\in\C^{md,nd}$ is constituted of $m\times n$ blocks of size $d\times d$, the $(i,j)$-th block being
$p_{ij}(A)$.\medskip

The surprising fact is the existence of such uniform bounds  $\q$ and $\qcb$, independently of the matrix $A$, of its size, of the degree of polynomials used,
as well as of $m$ and $n$ for $\qcb$. 
This universality allows us to extend the inequalities to any bounded linear operator $A\in{\cal L}(H)$ on a complex Hilbert space
$H$ (and even to unbounded operators), and also to any continuous function $p$ (resp. $P$) on $\overline{W(A)}$ which is holomorphic in the interior of the numerical range. We refer to \cite{crac,crzx1} for these extensions and for some applications.\medskip

Note that, in the case of a normal matrix $A$, we have better estimates
$\| p(A)\|\leq\sup_{z\in \sigma(A)}|p(z)|$ and $\|P(A)\|\leq \sup_{z\in \sigma(A)} \|P(z)\|$,
where $\sigma(A)$ denotes the spectrum of $A$ (it is well known that $\sigma(A)\subset W(A)$.)
Inequalities \eqref{cst} and \eqref{ccb} are of interest since they provide estimates for non-normal matrices.
More generally, if $A=X^{-1}NX$ is similar to a normal matrix $N$, we easily get 
$\| p(A)\|\leq\|X\|\,\|X^{-1}\|\sup_{z\in \sigma (A)}|p(z)|$ and $\|P(A)\|\leq \|X\|\,\|X^{-1}\|\sup_{z\in \sigma (A)}\|P(z)\|$. Thus, in this case, the inequality \eqref{conje} (as well as its completely bounded version $\|P(A)\|\leq 2\sup_{z\in W(A)} \|P(z)\|$) holds if the condition number of $X$ satisfies $\|X\|\,\|X^{-1}\|\leq 2$.
\medskip

There exist some other cases where we know that the inequality \eqref{conje} (as well as its completely bounded version) is satisfied:

$\bullet$ {\it If $p(z)=(z{-}z_0)^n$.} It suffices to consider the case $z_0=0$. Then, a result of Okubo and Ando \cite{okan} implies that $A=X^{-1}BX$ with $\|B\|\leq w(A):=\max\{|z|\,; z\in W(A)\}$ and $\|X\|\,\|X^{-1}\|\leq 2$. The inequalitiy \eqref{conje} and its completely bounded version then follow from a von Neumann inequality\,\cite{vn, paul}. \smallskip

$\bullet$ {\it If $W(A)$ is a disk}. The proof \cite{bacrde} uses the same argument of Okubo and Ando.\smallskip

$\bullet$ {\it In dimension $d=2$.} See \cite{crzx} and \cite{bacrde}. The proof uses a similarity transformation and the knowledge of the conformal map from the numerical range (here an ellipse) onto the unit disk. \smallskip

$\bullet$ {\it If $A$ is a quadratic matrix}. This means that $(A{-}z_1I)(A{-}z_2I)=0$
for some complex numbers $z_1$ and $z_2$. Then, $A$ is unitarily similar to a direct sum of $1\times1$ and $2\times 2$ matrices with eigenvalues $z_1$ or $z_2$ or $z_1$ and $z_2$, the numerical range is an ellipse (cf.  \cite{tw} Theorem 1.1), and the inequality follows from the case $d=2$. \smallskip

$\bullet$ {\it If $d=3$ and $A^3=0$.} See \cite{crzx3}; the argument is not fully mathematical, but uses also a small computational part. \smallskip

$\bullet$ {\it If $A=PD$ where $P$ is a permutation matrix and $D$ is diagonal.} The proof has been obtained by Daeshik Choi \cite{choi}.
In this situation, the numerical range has the symmetries of the regular $d$-sided polygon
and the image of $A$ by a conformal mapping from $W(A)$ onto the unit disk has the form $c\,A$.\smallskip

$\bullet$ {\it In infinite dimension\,: if $W(A)$ contains a sector of angle $2\alpha\geq \frac{2\pi }{3}$.} Then, it is known \cite{bacrde} that  $\|R(A)\|\leq \frac{\pi -\alpha}{\alpha}\sup_{z\in W(A)} \|R(z)\|$ for all rational functions bounded in $W(A)$. Thus \eqref{conje} and \eqref{ccb} hold for
$\alpha\geq \frac\pi 3$.

 \bigskip

\noindent{\it Some open problems.} My conjecture $\q=2$ and its strong form $\qcb=2$, or obtaining the exact values of $\q$ and $\qcb$, seem to me exciting, but difficult, open problems. At least, a challenging question will be to sharply improve the upper bound $11.08$. Our proof of this estimate is quite involved and clearly not optimal. It is not clear to me whether $\q=\qcb$.
In case of a positive answer, it will be interesting to understand the difference between this situation and the general context of polynomial bounds and complete bounds. (It is  known  \cite{cgh} that they can be different if $d\geq 3$.)
\medskip

In order to consider easier problems, we may try to bound constants related to subfamilies of matrices
\begin{align*}
\q(d)&:=\sup_{A,p}\{\|p(A)\|\,; \ A\in\C^{d,d},\ p\in \C[z], \ |p(z)|\leq1\text{ in }W(A)\},\hskip1cm\\
\qcb(d)&:=\sup_{A,P,m,n}\{\|P(A)\|\,;\  A\in\C^{d,d},\ P\in \C^{m,n}[z] , \ \|P(z)\|\leq1\text{ in }W(A)\},\\
\psi(A)&:=\sup_{p}\{\|p(A)\|\,; \ p\in \C[z], \ |p(z)|\leq1\text{ in }W(A) \},\\\
\pc(A)&:=\sup_{P,m,n}\{\|P(A)\|\,;\  P\in\C^{m,n}[z] , \ \|P(z)\|\leq1\text{ in }W(A)\}.
\end{align*}
It is easily verified that $\q$ and $\qcb$ are non-decreasing with $d$\,; furthermore $\q=\sup_{d}\q(d)$ and $\qcb=\sup_{d}\qcb(d)$. Clearly $\q(1)=\qcb(1)=1$; we have succeeded to show \cite{bacrde}  that $\q(2)=\qcb(2)=2$, but failed with the questions $\q(3)=\qcb(3)$ and $\q(3)=2$\,; a fortiori the analogue questions are open for $d>3$.
(The numerical experiments seem to confirm that $\q(3)=2$, but we have only succeeded to prove that $\q(3)\leq 9.995$.)

In Section\,2, we will see that the bounds
\[
\q(d)=\max\{\psi(A)\,; A\in\C^{d,d}\},\quad
\q_{cb}(d)=\max\{\pc(A)\,; A\in\C^{d,d}\},
\]
are realized and that, if all eigenvalues of $A$ are in the interior of $W(A)$, then $\psi $ and $\pc$ depend continuously on $A$.

\medskip

In Section\,3, we consider constants related to  the family of matrices with numerical range contained in a non-empty convex domain $\varOmega\neq\C$ of  the complex plane, not necessarily bounded. We set
\[
C(\varOmega,d):=\sup_{A,r}\{\Vert r(A)\Vert ; A\in\C^{d,d}, \ W(A)
\subset\varOmega ,\ r:\C\to\C,\ |r(z)|\leq 1,\forall z\in
\varOmega \},
\]
\begin{align*}
C_{cb}(\varOmega,d):=\sup_{A,R,m,n}\{\Vert R(A)\Vert ;  A\in\C^{d,d}, \ W(A)
\subset\varOmega,\hskip3cm\\ R:\C\to\C^{m,n},\ \Vert R(z)\Vert \leq 1,\forall z\in
\varOmega \},
\end{align*}
\[
C(\varOmega):=\sup_{d}C(\varOmega,d),\qquad C_{cb}(\varOmega):=\sup_{d}C_{cb}(\varOmega,d).
\]
In these definitions, $r$ and $R$ denote rational functions. (This choice has been made for treating together the bounded and unbounded domain cases, but for a bounded $\varOmega$ it would have sufficed to only consider polynomials $r$ and $R$ without change of the values. Similarly, the condition $ W(A)\subset\varOmega$ could be replaced by $ W(A)\subset\overline\varOmega$.)
\medskip
Clearly, there holds
\[
\q(d)=\sup_{\varOmega} C(\varOmega,d), \quad \q=\sup_{\varOmega} C(\varOmega), \quad
\q_{cb}(d)=\sup_{\varOmega} C_{cb}(\varOmega,d), \quad \q_{cb}=\sup_{\varOmega} C_{cb}(\varOmega).
\]
We review some results concerning these constants. In Section\,4,
we give some lower bounds for $C(\varOmega,d)$,  while Section\,5 is concerned with their realization.
In Section\,6, I give some personal comments on the interest in the numerical range and in Section\,7, I provide some arguments supporting my conjecture. Sections\,8 and 9 
describe some of our numerical experiments realized with the open source software SCILAB.
Section\,10 is devoted to matrices with the unit disk as numerical range and realizing $\psi (A)=2$.
Finally, in Section\,11, we conclude by a list of open problems and some final comments.
\smallskip

\noindent{\it About the complete bound.}
Let us consider the map $u_A$\,: $p\mapsto p(A)$ from the algebra of polynomials $\C[z]$ (equipped  with the maximum norm on $W(A)$) into the algebra of $d\times d$ matrices. Clearly, inequality \eqref{cst} means that the map $u_A$ is bounded with constant $\q$. Inequality \eqref{ccb} is the tensorial version of \eqref{cst}; by definition, $\qcb$ is called the complete bound of $u_A$. The notion of completely bounded maps is defined in a more general context and plays an important role in operator theory; these maps are the natural morphisms in the category of operator spaces and have been the subject of extensive studies since the early 80's; see for instance the books\,\cite{efru,paul}. I am convinced of their interest in an applied situation. For instance, in their pioneering work\,\cite{dede}, Bernard and Fran\c{c}ois Delyon have shown the usefulness of the numerical range by solving the Burkholder conjecture. For that, they have established the estimate
\[
\sum_{n\geq 1}n\|T^n{-}T^{n-1}\|^2\leq C(W(T))\sup_{z\in W(T)}\sum_{n\geq 1}n|z^n{-}z^{n-1}|^2.
\]
This estimate now corresponds to \eqref{ccb} with $n=1$, $m\to\infty$, 
\[
P(z)=(z{-}1,\dots,\sqrt n (z^n{-}z^{n-1}),\dots)^T\quad\text{and}\quad
C(W(T))\leq \qcb.
\]
Another context where formulations using matrix-valued polynomial (resp.\,rational) functions of a matrix naturally occur is the discretization of linear differential systems by explicit (resp.\,implicit) linear multistep methods.
For instance, if we discretize the pendulum equations $\dot{p}=A\,q$, $\dot{q}=p$ by the St\"ormer-Verlet scheme with a stepsize $h$, we get
\[
p_{n+1/2}=p_{n-1/2}+hAq_n,\quad q_{n+1}=q_n+hp_{n+1/2}.
\]
Therefore
\[
\begin{pmatrix}q_n\\p_{n+1/2}\end{pmatrix}=
\begin{pmatrix}1 &h\\hA &1+h^2A\end{pmatrix}^n
\begin{pmatrix}q_0\\p_{1/2}\end{pmatrix}
.
\]
\section{Bounds related to a matrix} 
We first note that $\psi(\alpha A{+\beta I})=\psi (A)$, $\pc(\alpha A{+\beta I})=\pc(A)$ if  $\alpha,\beta \in\C$ and $\alpha\neq 0$, and
$\psi(U^*AU)=\psi (A)$, $\pc(U^*AU)=\pc(A)$, if $U$ is a unitary matrix.
\begin{theorem}
 The maps $A\mapsto \psi (A)$ and $A\mapsto \pc (A)$ are continuous in the set of matrices with
 all eigenvalues of $A$  in the interior of the numerical range.
 \end{theorem}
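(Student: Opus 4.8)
The plan is to establish continuity by proving two one-sided estimates: upper semicontinuity and lower semicontinuity, at an arbitrary matrix $A_0$ all of whose eigenvalues lie in the interior of $W(A_0)$. Throughout I work up to the normalizations noted just before the theorem (translation, scaling, unitary conjugation), so I may assume $0$ is an interior point of $W(A_0)$ and that quantities are suitably normalized. The key geometric input is that, since the eigenvalues of $A_0$ are interior to $W(A_0)$, for a nearby matrix $A$ one has $\sigma(A)\subset \operatorname{int}W(A_0)$ and, moreover, $W(A)$ is close to $W(A_0)$ in the Hausdorff metric; in particular there exist $0<\rho<1$ (depending only on $A_0$) and a neighborhood $\mathcal N$ of $A_0$ such that $\rho\,W(A_0)\subset W(A)\subset \rho^{-1}W(A_0)$ for all $A\in\mathcal N$. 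This uses continuity of the numerical range under perturbation together with the fact that an interior point of the limit set stays interior.

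For \emph{lower semicontinuity}, fix $A_0$ and let $P$ be a matrix-valued polynomial with $\|P(z)\|\le 1$ on $W(A_0)$ and $\|P(A_0)\|$ close to $\pc(A_0)$. For $A$ near $A_0$ the polynomial $P$ need not be bounded by $1$ on $W(A)$, but since $W(A)\subset\rho^{-1}W(A_0)$ and $P$ is a fixed polynomial, $\sup_{W(A)}\|P\|\le 1+\varepsilon(A)$ with $\varepsilon(A)\to 0$ as $A\to A_0$ (here one can either invoke a Bernstein/Markov-type bound on the growth of a fixed polynomial under a small dilation of a fixed compact set, or simply use uniform continuity of $z\mapsto\|P(z)\|$ on a fixed compact neighborhood of $W(A_0)$ together with the Hausdorff convergence $W(A)\to W(A_0)$). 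Meanwhile $P(A)\to P(A_0)$ by continuity of polynomial evaluation. Hence $\pc(A)\ge \|P(A)\|/(1+\varepsilon(A))\to \|P(A_0)\|$, and taking the supremum over such $P$ gives $\liminf_{A\to A_0}\pc(A)\ge \pc(A_0)$. The same argument with scalar $p$ handles $\psi$.

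For \emph{upper semicontinuity} — which I expect to be the main obstacle — the difficulty is that the competing polynomials $P$ for $A$ range over an infinite-dimensional family, so one cannot argue by pointwise convergence of a fixed $P$. The idea is to reduce to a fixed compact set and transfer the test function. Given $A\in\mathcal N$ and $P$ with $\|P(z)\|\le 1$ on $W(A)$, use $\rho\,W(A_0)\subset W(A)$ to get $\|P(z)\|\le 1$ on $\rho\,W(A_0)$; then $\widetilde P(z):=P(\rho z)$ satisfies $\|\widetilde P\|\le 1$ on $W(A_0)$, so $\|\widetilde P(A_0)\|\le \pc(A_0)$, i.e. $\|P(\rho A_0)\|\le \pc(A_0)$. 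It remains to compare $P(A)$ with $P(\rho A_0)$ uniformly over all such $P$; this is where one must be careful, since $P$ can have arbitrarily large degree. The mechanism I would use is the \emph{von Neumann--type / Okubo--Ando representation} already quoted in the introduction, or more directly a Cauchy-integral (Riesz--Dunford) representation: since $\sigma(A)$ and $\sigma(\rho A_0)$ lie in $\operatorname{int}W(A_0)$, choose a fixed smooth contour $\Gamma\subset\operatorname{int}W(A_0)$ enclosing both spectra for all $A\in\mathcal N$ (shrinking $\mathcal N$ if needed), and write
\[
P(A)-P(\rho A_0)=\frac{1}{2\pi i}\int_\Gamma P(\zeta)\,\bigl((\zeta I-A)^{-1}-(\zeta I-\rho A_0)^{-1}\bigr)\,d\zeta .
\]
On $\Gamma$ one has $\|P(\zeta)\|\le C(\Gamma)$ with $C(\Gamma)$ depending only on $A_0$ (by the maximum principle applied entrywise, using that $\Gamma$ is a fixed compact subset of the interior where $\|P\|\le 1$ on the boundary $W(A_0)\supset$ — more precisely, $\|P(\zeta)\|\le 1$ on $W(A_0)$ already bounds $P$ on $\Gamma$ by $1$), while $\|(\zeta I-A)^{-1}-(\zeta I-\rho A_0)^{-1}\|\le C'(A_0)\,\|A-\rho A_0\|$ uniformly on $\Gamma$ and on $\mathcal N$. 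Therefore $\|P(A)-P(\rho A_0)\|\le C''(A_0)\,\|A-\rho A_0\|$ uniformly over all competitors $P$, whence
\[
\|P(A)\|\le \pc(A_0)+C''(A_0)\,\|A-\rho A_0\|,
\]
and since $\rho=\rho(A)\to 1$ and $\|A-\rho A_0\|\to 0$ as $A\to A_0$, taking the supremum over $P,m,n$ yields $\limsup_{A\to A_0}\pc(A)\le \pc(A_0)$. The identical argument with scalar rational/polynomial functions gives the statement for $\psi$. Combining the two semicontinuity estimates proves continuity of $\psi$ and $\pc$ on the set of matrices whose eigenvalues lie in $\operatorname{int}W(A)$.

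Two technical points deserve care, and I would flag them rather than suppress them: first, the uniform choice of $\rho(A)<1$ with $\rho(A)\to1$ requires knowing that an interior point of $W(A_0)$ remains interior to $W(A)$ with a quantitative gap — this follows from Hausdorff continuity of $A\mapsto W(A)$ plus the hypothesis that $\sigma(A_0)$, and hence a neighborhood carrying positive ``numerical-range mass'', is interior; second, the contour $\Gamma$ must be chosen once and for all inside $\operatorname{int}W(A_0)$ and outside a fixed neighborhood of $\sigma(A_0)$, which is possible precisely because of the interiority hypothesis — this is exactly where the assumption that \emph{all} eigenvalues are interior is used, and it is what makes the resolvent bound on $\Gamma$ uniform.
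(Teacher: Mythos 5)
Your argument is correct and essentially the paper's own proof: nest the numerical ranges via a dilation factor tending to $1$, transfer the test function by rescaling (your $P(\rho z)$ plays the role of the paper's use of the scaling invariance $\psi(\tfrac{1}{\lambda_n}A_n)=\psi(A_n)$), and compare the two functional calculi through a Cauchy-integral resolvent estimate, the paper simply integrating over $\partial W(A)$ (resp. $\partial W(A_n)$) instead of an interior contour and obtaining the reverse inequality symmetrically. One small slip to fix: the bound $\Vert P\Vert\le 1$ is hypothesized on $W(A)$, not on $W(A_0)$, so to bound $P$ on $\Gamma$ you should invoke $\Gamma\subset\rho\,W(A_0)\subset W(A)$ for $A$ close to $A_0$ (the inclusion you already have), not the maximum principle on $W(A_0)$.
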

\begin{proof} We only consider the case $\psi $, the other case being similar.
Assume that $A_n\to A$ as $n\to\infty$ and that all eigenvalues of $A$ are in the interior of the numerical range. Without loss of generality, we can assume that $0$ is interior to $W(A)$, and that $0$ and 
all eigenvalues of $A_n$ are interior to $W(A_n)$. Then, there exists a sequence $\lambda_n\to 1$ such that 
\[
\tfrac{1}{\lambda _n}W(A_n)\subset W(A)\quad\text{and}\quad \tfrac{1}{\lambda _n}W(A)\subset W(A_n).
\]
Let us consider $p\in \C[z]$ with $|p|\leq 1$ in $W(A)$; then
\begin{align*}
\|p(A)\|&\leq \|p(A)-p(\tfrac{1}{\lambda _n}A_n)\|+\|p(\tfrac{1}{\lambda _n}A_n)\|\\
&\leq \|p(A)-p(\tfrac{1}{\lambda _n}A_n)\|+\psi (A_n),
\end{align*}
since $|p|\leq 1$ in $W(\tfrac{1}{\lambda _n}A_n)\subset W(A)$ and $\psi (\tfrac{1}{\lambda _n}A_n)=\psi (A_n)$. Furthermore,
\[
p(A)-p(\tfrac{1}{\lambda _n}A_n)=\frac{1}{2\pi i}\int_{\partial W(A)}p(\sigma )\big((\sigma{ -}A)^{-1}-(\sigma {-}\tfrac{1}{\lambda _n}A_n)^{-1})\big)\,d\sigma,
\]
whence
\[
\|p(A)-p(\tfrac{1}{\lambda _n}A_n)\|\leq \ \varepsilon _n:= \frac{1}{2\pi}\int_{\partial W(A)}\big\|(\sigma {-}A)^{-1}-(\sigma {-}\tfrac{1}{\lambda _n}A_n)^{-1}\big\|\,|d\sigma|.
\]
Therefore, there holds $\psi (A)\leq \varepsilon _n +\psi (A_n)$; similarly we obtain
$\psi (A_n)\leq \tilde\varepsilon _n +\psi (A)$ with
\[
\tilde\varepsilon _n:= \frac{1}{2\pi}\int_{\partial W(A_n)}\big\|(\sigma {-}A_n)^{-1}-(\sigma {-}\tfrac{1}{\lambda _n}A)^{-1}\big\|\,|d\sigma|.
\]
It is easily seen that $\varepsilon_n\to0$ and $\tilde\varepsilon_n\to0$ as $n\to \infty$, which shows the desired continuity.
\end{proof}\medskip

{\sc Remark.} \emph{It is also possible to show the continuity for matrices with distinct eigenvalues, but the continuity of $\psi $ is not generally true, for instance}
$\psi\begin{pmatrix}0 &0\\0 &0\end{pmatrix}=1$ and $\psi\begin{pmatrix}0 &\varepsilon\\0 &0 \end{pmatrix}=2$. {\it Also, using $p(z)=\frac{1-z^{\pi /2\varepsilon }}{1+z^{\pi /2\varepsilon }}$, it can be seen that}
\[
\psi \begin{pmatrix}0&0 &0\\0&1 &0\\0 &0&1\end{pmatrix}=1\quad\text{and}\quad
\psi\begin{pmatrix}0&0 &0\\0 &1&2\sin\varepsilon\\0 &0&1 \end{pmatrix}\geq \frac{\pi \sin\varepsilon }{2\varepsilon }.
\]\medskip

\begin{theorem}\label{att}
 There exists a matrix $A\in\C^{d,d}$ such that $\q(d)=\psi (A)$ and a matrix $A\in\C^{d,d}$ such that $\qcb(d)=\pc(A)$.
\end{theorem}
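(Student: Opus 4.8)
The plan is to run a compactness argument on a maximizing sequence, normalized by the invariances $\psi(\alpha A+\beta I)=\psi(A)$ and $\psi(U^*AU)=\psi(A)$ so that it stays in a fixed compact set, followed by an appeal to the preceding continuity theorem at the limit. Fix $d\ge 2$ (the case $d=1$ is trivial, since $\psi([0])=1=\q(1)$, and likewise $\pc([0])=1=\qcb(1)$). Choose $A_n\in\C^{d,d}$ and polynomials $p_n$ with $\sup_{W(A_n)}|p_n|\le 1$ and $\|p_n(A_n)\|\to\q(d)$. Since $\q(d)\ge\q(2)=2>1$, for large $n$ the matrix $A_n$ is not normal, so $W(A_n)$ has nonempty interior; after a translation and a dilation (which change nothing) I may assume $0\in\mathrm{int}\,W(A_n)$ and $w(A_n)=1$, whence $\|A_n\|\le 2w(A_n)=2$. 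Passing to a subsequence, $A_n\to A$, with $W(A_n)\to W(A)$ in the Hausdorff metric and $\psi(A)\le\q(d)$ by definition. If all eigenvalues of $A$ lie in $\mathrm{int}\,W(A)$, the preceding theorem gives that $\psi$ is continuous at $A$, so $\psi(A)=\lim_n\psi(A_n)=\q(d)$, and we are done; the same normalization and the same theorem (which treats $\pc$ as well) settle the completely bounded constant.

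The trouble is that $A$ may have an eigenvalue $\lambda_0$ on $\partial W(A)$, a point where, as the Remark after the continuity theorem shows, $\psi$ need not be continuous. Here I would invoke the classical fact that a boundary eigenvalue is reducing: if $\lambda_0\in\sigma(A)\cap\partial W(A)$ with unit eigenvector $v_0$, choose $\theta$ so that the line $\{\mathrm{Re}(e^{i\theta}z)=\mathrm{Re}(e^{i\theta}\lambda_0)\}$ supports $W(A)$; then $v_0$ is simultaneously an eigenvector of $A$ and an eigenvector for the largest eigenvalue of the Hermitian matrix $\tfrac12(e^{i\theta}A+e^{-i\theta}A^*)$, hence an eigenvector of $A^*$, so $\mathrm{span}(v_0)$ reduces $A$ and $A$ is unitarily similar to $[\lambda_0]\oplus A_1$ with $A_1\in\C^{d-1,d-1}$. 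Because $\lambda_0\in W(A)$ and $W(A_1)\subseteq W(A)$, every $p$ with $\sup_{W(A)}|p|\le 1$ satisfies $|p(\lambda_0)|\le 1$ and $\sup_{W(A_1)}|p|\le 1$, so $\|p(A)\|=\max(|p(\lambda_0)|,\|p(A_1)\|)\le\max(1,\psi(A_1))$, and therefore $\psi(A)\le\psi(A_1)\le\q(d-1)$. This makes an induction on $d$ natural. If $\q(d)=\q(d-1)$, the realization is immediate: if $A'\in\C^{d-1,d-1}$ realizes $\q(d-1)$ — it then has $\psi(A')=\q(d)>1$, so it is not normal and $\mathrm{int}\,W(A')\ne\emptyset$ — and $\lambda\in\mathrm{int}\,W(A')$, then $A:=A'\oplus[\lambda]$ has $W(A)=W(A')$ and $\psi(A)=\psi(A')=\q(d)$. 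If instead $\q(d)>\q(d-1)$, the bound just proved shows that every matrix with an eigenvalue on the boundary of its numerical range has $\psi<\q(d)$, so it only remains to realize the supremum of $\psi$ over the open set $G_d$ of matrices all of whose eigenvalues are interior to their numerical range, where $\psi$ is continuous by the previous theorem.

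The main obstacle — the step I expect to cost the most effort — is that $G_d$ is open but not closed, and the Remark's examples show that $\psi$ jumps strictly upward as one approaches its boundary; so the normalized limit $A$ of a maximizing sequence chosen inside $G_d$ may fail to lie in $G_d$, and then the continuity argument does not apply. What is needed is that the maximizing sequence can be chosen so that its normalized limit stays in $G_d$ — equivalently, that $\limsup_{B\to A}\psi(B)<\q(d)$ whenever $A$ has an eigenvalue on $\partial W(A)$ (in particular whenever $\mathrm{int}\,W(A)=\emptyset$). I would try to obtain this from a quantitative, perturbative form of the reducing‑eigenvalue lemma: if an eigenvalue of $B$ is within $\varepsilon$ of $\partial W(B)$ then $B$ is $O(\sqrt\varepsilon)$‑close to a matrix $[\mu]\oplus\tilde B$, and one pushes this through the estimate $\psi(B)\le\psi(\tilde B)+o(1)\le\q(d-1)+o(1)$, controlling the near‑optimal polynomials near the pinching vertex of $W(B)$ by a Schwarz‑type inequality. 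The delicate point is exactly that, as the perturbations of $\mathrm{diag}(0,1,1)$ in the Remark illustrate, the optimal polynomials can have derivatives that blow up as $\varepsilon\to 0$; one must verify that this blow‑up remains compensated so that $\psi$ stays below $\q(d)$. Once the limit is known to lie in $G_d$, continuity of $\psi$ and of $\pc$ from the preceding theorem finishes both assertions at once.
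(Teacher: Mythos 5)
Your overall skeleton coincides with the paper's: normalize a maximizing sequence by the invariances, pass to a limit $A$, use the continuity theorem when $\sigma(A)\subset\stackrel{\circ}{W(A)}$, observe that a boundary eigenvalue is reducing so that the limit splits as $\begin{pmatrix}\lambda I&0\\0&C\end{pmatrix}$, and induct on $d$ by separating the cases $\q(d)=\q(d-1)$ and $\q(d)>\q(d-1)$. But the step you yourself flag as ``the main obstacle'' is exactly the heart of the paper's proof, and you do not supply it: when $\q(d)>\q(d-1)$ you need to exclude the possibility that the normalized limit $A$ of the maximizing sequence has an eigenvalue on $\partial W(A)$, i.e.\ you need $\limsup_n\psi(A_n)<\q(d)$ in that situation, and your proposal only sketches a hoped-for ``quantitative perturbative reducing-eigenvalue lemma'' ($A_n$ is $O(\sqrt\varepsilon)$-close in norm to $[\mu]\oplus\tilde B$, hence $\psi(A_n)\le\psi(\tilde B)+o(1)$) together with an unspecified Schwarz-type control of near-optimal polynomials. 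As stated, that route is not sound: $\psi$ is not continuous, and norm-closeness to a block matrix gives no control of $\psi$ --- your own quoted examples $\psi\begin{pmatrix}0&\varepsilon\\0&0\end{pmatrix}=2$ versus $\psi(0)=1$, and the $3\times3$ perturbation of $\mathrm{diag}(0,1,1)$ with $\psi\ge\frac{\pi\sin\varepsilon}{2\varepsilon}$, show that an $O(\varepsilon)$ perturbation can raise $\psi$ by a fixed amount. So the estimate $\psi(A_n)\le\psi(\tilde B)+o(1)$ cannot be extracted from closeness alone; this is a genuine gap, not a technicality.

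The paper closes this gap not by perturbing in norm but by conjugating each $A_n$ with an \emph{exact} similarity that tends to the identity: using the Riesz projector $P_n=\frac{1}{2\pi i}\int_{|z-\lambda|=r}(zI-A_n)^{-1}dz$ onto the $k$ eigenvalues of $A_n$ near $\lambda$, one builds $M_n\to I$, takes its QR factorization $M_n=Q_nR_n$, and gets $\tilde A_n=Q_n^*A_nQ_n=\begin{pmatrix}\tilde B_n&\tilde D_n\\0&\tilde C_n\end{pmatrix}$ with $\tilde D_n\to0$; solving the Sylvester equation $-\tilde B_nE_n+E_n\tilde C_n=\tilde D_n$ gives $E_n\to0$ and the exact block-diagonalization $\tilde A_n=\begin{pmatrix}I&E_n\\0&I\end{pmatrix}\begin{pmatrix}\tilde B_n&0\\0&\tilde C_n\end{pmatrix}\begin{pmatrix}I&-E_n\\0&I\end{pmatrix}$. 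Since $W(\tilde B_n),W(\tilde C_n)\subset W(\tilde A_n)$, any $p$ with $|p|\le1$ on $W(\tilde A_n)$ satisfies $\|p(\tilde A_n)\|\le(1+\|E_n\|)^2\max(\q(k),\q(d-k))$, hence $\psi(A_n)=\psi(\tilde A_n)\le(1+\|E_n\|)^2\max(\q(k),\q(d-k))$, and letting $n\to\infty$ yields $\q(d)\le\max(\q(k),\q(d-k))$, contradicting $\q(d)>\q(d-1)$. This similarity-based bound on $\psi(A_n)$ (rather than on $\psi$ of a nearby matrix) is precisely what your proposal is missing; without it, or an equivalent argument, the theorem is not proved. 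Your treatment of the case $\q(d)=\q(d-1)$ by padding a $(d-1)$-dimensional realizer with an interior eigenvalue is fine and matches the paper's ``by recursion'' remark, and the $d=1$ and continuity portions are correct.
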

\begin{proof} We only consider the polynomial bound  $\q(d)$, the proof for $\qcb$ being similar.
 This is clearly the case if $d=1$, or if $\q(d)=2$. We assume $d\geq 3$ and $\q(d)>\q(d{-}1)$
 (the case  $\q(d)=\q(d{-}1)$ may be treated by recursion). Then there exists a sequence of matrices $A_n\in\C^{d,d}$ with $\psi (A_n)\to \q(d)$. Without loss of generality, we may assume that $\|A_n\|=1$, trace$(A_n)=0$, and $A_n\to A$. If all eigenvalues of $A$ are in the interior of $W(A)$ then, according to the previous theorem, $\psi (A)=\q(d)$. It suffices to show that the opposite case $\sigma (A)\cap\partial W(A)\neq \emptyset$ is impossible. Indeed, otherwise $A$ will be unitarily similar to a block diagonal matrix
 $\begin{pmatrix}\lambda I &0\\0 &C\end{pmatrix}$ with $\lambda $ eigenvalue with multiplicity $k$, $\lambda \notin \sigma(C)$. Note that $\|A\|=1$ and trace$(A)=0$ induces $k<d$. Replacing $A$ by $U^*AU$ and $A_n$ by $U^*A_nU$ if needed, we may assume from now on that $A=\begin{pmatrix}\lambda I &0\\0 &C\end{pmatrix}$. For $n$ large enough, $A_n$ has exactly $k$ eigenvalues inside a disk $\{z\,;|z{-}\lambda|<r\}$, the others being strictly outside. We introduce the projector $P_n=\frac{1}{2\pi i}\int_{|z{-}\lambda|=r} (zI{-}A_n)^{-1}dz$
 on the corresponding invariant subspace and the matrix $M_n$ with column vectors $m_j=P_ne_j$, if $j\leq k$, and  $m_j=(I{-}P_n)e_j$, if $j>k$ ($\{e_1,e_2,\dots,e_d\}$ denoting the canonical basis of $\C^d$).
 Then, $M_n^{-1}A_nM_n=\begin{pmatrix}B_n &0\\0 &C_n\end{pmatrix}$ is block diagonal and $\lim_{n\to\infty}M_n=I$. Let us use the factorization $M_n=Q_nR_n$ with $Q_n$ unitary and $R_n$ upper triangular with positive diagonal; then  $\lim_{n\to\infty}Q_n=I$ and $\lim_{n\to\infty}R_n=I$.
 The matrix $\tilde A_n=Q_n^*A_nQ_n$ satisfies
\[
\tilde A_n=Q_n^*A_nQ_n=R_n \begin{pmatrix}B_n&0\\0&C_n\end{pmatrix}R_n^{-1}=
\begin{pmatrix}\tilde B_n&\tilde D_n\\0&\tilde C_n\end{pmatrix},
\]
 with $\lim_{n\to\infty}\tilde B_n=\lambda I$, $\lim_{n\to\infty}\tilde C_n=C$ and $\lim_{n\to\infty}\tilde D_n=0$. Since the spectra of $\tilde B_n$ and $\tilde C_n$ are disjoint for $n$ sufficiently large, the Sylvester equation
 \[
-\tilde B_n E_n+E_n\tilde C_n=\tilde D_n
\]
 has a unique solution $E_n\in \C^{k,n-k}$ and $\lim_{n\to\infty}E_n=0$. Now, we note that
 \[
\tilde A_n= \begin{pmatrix}I&E_n\\0&I\end{pmatrix}
\begin{pmatrix}\tilde B_n&0\\0&\tilde C_n\end{pmatrix}
\begin{pmatrix}I&-E_n\\0&I\end{pmatrix};
\]
 therefore, if $p\in\C[z]$ satisfies $|p|\leq 1$ in $W(\tilde A_n)$,
 \[
p(\tilde A_n)= \begin{pmatrix}I&E_n\\0&I\end{pmatrix}
\begin{pmatrix}p(\tilde B_n)&0\\0&p(\tilde C_n)\end{pmatrix}
\begin{pmatrix}I&-E_n\\0&I\end{pmatrix},
\]
and $\|p(\tilde A_n)\|\leq (1{+}\|E_n\|)^2\max(\|p(\tilde B_n)\|,\|p(\tilde C_n)\|)$. But, $|p|$ is also bounded by 1 in 
$W(\tilde B_n)$ and in $W(\tilde C_n)$ (subsets of $W(\tilde A_n)$); thus 
$\|p(\tilde B_n)\|\leq \q(k)$ and $\|p(\tilde C_n)\|\leq \q(d{-}k)$. This yields $\psi (\tilde A_n)\leq (1{+}\|E_n\|)^2\max(\q(k),\q(d{-}k))$. Then, noticing that $\psi (A_n)=\psi (\tilde A_n)$ and passing to the limit, we get $\q(d)\leq\max(\q(k),\q(d{-}k))$, which contradicts $\q(d)>\q(d{-}1)$. 
 \end{proof}\medskip
 
We turn now to the realization of $\psi (A)$. If the matrix $A$ is normal, then $\psi (A)=1$ and this bound is realized by $p(z)=1$.
We may thus assume that the matrix $A$ is not normal; then the interior of $W(A)$ is non empty, and there exists a conformal map $a$ from the interior of $W(A)$ onto the open unit disk $\D$. By density of the space $\C[z]$ (Mergelyan's theorem) it is easily seen that
\[
\psi(A)=\sup_{f}\{\|f(A)\|\,; \ f\in C(W(A))\cap {\mathcal H}(\stackrel{\circ}{W(A)}), \ |f(z)|\leq1\text{ in }W(A) \},
\]
($\mathcal H(\varOmega)$ denotes the set of holomorphic function in the open $\varOmega$).
We set now $B=a(A)$ and $f=g\circ a$. Then $f(A)=g(B)$, and clearly $\psi (A)=\psi _{\D }(B)$ with
\[
\psi_{\D}(B)=\sup_{g}\{\|g(B)\|\,; \ g\in C(\overline\D)\cap {\mathcal H}(\D), \ |g(z)|\leq1\text{ in }\D) \}.
\]
If $A$ has no eigenvalue on the boundary of $W(A)$, then 
the eigenvalues of the matrix $B$ are in the open unit disk and $\psi_{_\D}(B)$ is attained by a Blaschke product with at most $d{-}1$ terms  \cite{crzx}. More precisely, we have 
\begin{equation}\label{psid}
\psi_{_\D}(B):=\max_{\zeta_{j}}\{\|g(B)\|\,; g(z)=\prod_{j=1}^{r}
\frac{z-\zeta_j}{1-\bar\zeta_j z}, \ \zeta_{1},\dots,\zeta_{r}\in \D,\ r\leq d-1\},
\end{equation}
and then by setting $b_{z_j}(z)=(a(z){-}a(z_j)/(1{-}\overline{a(z_j)}a(z))$
\[
\psi(A)=\max\{\Big\|\prod_{j=1}^{r} b_{z_j}(A)\Big\|\,; \ z_1,\dots,z_r\in\  \stackrel{\circ}{W(A)},\ r\leq d-1\}.
\]
If $A$ has $k$ eigenvalues on the boundary of $W(A)$, we get easily the same result, with $r\leq d{-}1$ replaced by $r\leq d{-}k{-}1$. 
\medskip

For the completely bounded analogue quantity, a Paulsen theorem \cite{paul}
provides the characterization
\begin{equation}\label{psicbd}
\psi_{_{cb,\D}}(B):=\min_{S}\{\|S\|\,\|S^{-1}\|\,; S\in\C^{d,d}, \|S^{-1}BS\|\leq1\}.
\end{equation}
We deduce
\[
\pc(A):=\min_{S}\{\|S\|\,\|S^{-1}\|\,; S\in\C^{d,d}, \|S^{-1}a(A)S\|\leq1\}.
\]

\section{Constants related to a convex domain}

In all this paper, by convex domain $\varOmega$, we mean a convex open subset of the complex plane, not necessarily bounded, such that $\varOmega\neq \emptyset$ and $\varOmega\neq\C$. Recall that, with $r$ and $R$ denoting rational functions,
\[
C(\varOmega,d):=\sup_{A,r}\{\Vert r(A)\Vert ; A\in\C^{d,d}, \ W(A)
\subset\varOmega ,\ r:\C\to\C,\ |r(z)|\leq 1,\forall z\in
\varOmega \},
\]
\begin{align*}
C_{cb}(\varOmega,d):=\sup_{A,R,m,n}\{\Vert R(A)\Vert ;  A\in\C^{d,d}, \ W(A)
\subset\varOmega ,\hskip3cm\\\ R:\C\to\C^{m,n},\ \Vert R(z)\Vert \leq 1,\forall z\in
\varOmega \},
\end{align*}
\[
C(\varOmega):=\sup_{d}C(\varOmega,d),\qquad C_{cb}(\varOmega):=\sup_{d}C_{cb}(\varOmega,d).
\]

{\sc Remarks.}

 1) {\it The previous constants depend  only on $d$ and on the shape of $\varOmega
$. More precisely, if $\varphi $ is linear\,: $\varphi (z)=a\!+\! 
b\,z,$
or conjugate linear\,: $\varphi (z)=a\!+\! b\,\bar z$, $\ a,b\in \C,\ 
b\neq
0$, then we have $C(\varOmega )=C(\varphi (\varOmega )),\dots,C_{cb}(\varOmega,d )= 
C_{cb}(\varphi
(\varOmega ),d)$.}
\medskip

2) {\it A classical result of J. von Neumann \cite{vn} asserts that $C(\varOmega)=1$, if $\varOmega$ is a half-plane\,; as soon as the notion of ``completely bounded'' has appeared, it has been remarked that in this case we also have $C_{cb}(\varOmega)=1$. }

3) {\it Obviously $C(\varOmega )\leq C_{cb}(\varOmega )$ and $C(\varOmega,d)\leq
C_{cb}(\varOmega,d)$. Furthermore, the last two constants are increasing (perhaps not strictly) functions of $d$.
}\medskip

Except for the old half-plane inequality, the first result on this subject is quite recent. In the nice paper \cite{dede} an estimate $C(\varOmega)<+\infty$ is given for any bounded convex domain $\varOmega$\,; in \cite{bacrde,crac} we have shown that this result is still valid in completely bounded form, and improved the estimate to
\[
C_{cb}(\varOmega )\leq \min\big(11.08,2+\pi +\inf_{\omega  \in\, \partial\varOmega }{\rm
TV}(\log|\sigma\! -\!\omega |)\big)\,;
\]
here TV$(\log|\sigma\! -\!\omega |)$ is the total variation of
$\log(|\sigma\! -\!\omega |)$ as $\sigma $ traces $\partial \varOmega$. (A slightly better estimate can be deduced from Lemma 9 in \cite{crac}.)

\smallskip A similar approach provides the inequality
\[
C_{cb}(\varOmega )\leq \min\big(11.08,\ 1+\frac{2}{\pi }\int_\alpha^{\pi /2}\frac{\pi 
-x+\sin x}{\sin x}\, dx,\ \frac{\pi -\alpha}{\alpha}\big),
\]
if $\varOmega $ contains a sector with angle $2\alpha $,
$0<\alpha \leq \tfrac{\pi }{2}$. For a sector $\varOmega
=S_\alpha $ (with angle $2\alpha \leq \pi $) we have obtained the more precise estimates
\cite{crde}, \cite{bacrde}, \cite{bc},
\[
\frac{\pi\,\sin\alpha}{2\alpha}\leq C(S_{\alpha})\leq C_{cb}(S_\alpha  )\leq \frac{\pi -\alpha }{\pi }\Big( 2{-}\frac{2}{\pi}\log\tan\big(\frac{\alpha \,\pi}
{4(\pi \!-\! \alpha)}\big)\Big),\quad{ \rm for } \  \alpha\in (0,\pi/2],
\]
and
\[
C(S_{\alpha})\leq C_{cb}(S_\alpha  )\leq 2{-}\frac{2\alpha}{\pi}{+}\frac{2\,\cos \alpha}{\pi\,\sqrt{1{+}2\cos 2\alpha}}\arccos \Big(\frac{\cos(\pi{-}2\alpha)}{\cos \alpha}\Big), \quad
{ \rm for } \  \alpha\in [0,\pi/3].
\]
The second bound is better than the first if $\alpha\leq.22\,\pi$ and is still valid if we replace the sector
$S_{\alpha}$ by (a domain limited by) a branch of hyperbola of angle $2\alpha$.
In \cite{bc} we derived the bound $C_{cb}({\cal E})\leq 2+2/\sqrt{4\!-\!e^2}$
for an ellipse ${\cal E}$ of eccentricity $e$ and $C_{cb}({\cal P})\leq 2\!+\!2/\sqrt3$ for a parabola
${\cal P}$. The estimate $C_{cb}(S_{0})\leq 2\!+\!2/\sqrt3$ is also known for a strip $S_{0}$, \cite{crde}.
  
 The exact values known are for the half-plane case $C(\varPi )=C_{cb}(\varPi )=1$  and for the disk case $C(D )=C_{cb}(D )=2$, see  \cite{bacrde}\,; the values of $C(S_{\alpha},2)$ are also known, see  \cite[Theorem\,4.2]{crzx}. The other bounds, and in particular the general bound
$C_{cb}(\varOmega) \leq 11.08$, are very pessimistic. 

In order to characterize these constants it may be useful to introduce a conformal mapping $a$ from
$\varOmega$ to $\D$ and to set
 $b_{z_j}(z)=(a(z){-}a(z_j)/(1{-}\overline{a(z_j)}a(z))$. Then
\[
C(\varOmega,d)=\max_{A,z_1,\dots,z_r}\{\Big\|\prod_{j=1}^{r} b_{z_j}(A)\Big\|\,; A\in\C^{d,d},\ W(A)\subset \varOmega,\ z_1,\dots,z_r\in\varOmega,\ r\leq d-1\},
\]
\[
C_{cb}(\varOmega,d):=\max_{A\in\C^{d,d}}\{\min_{S\in\C^{d,d}}\{\|S\|\,\|S^{-1}\|\,;  \|S^{-1}a(A)S\|\leq1\}\,; \ W(A)\subset \varOmega\}.
\]

We turn now to the lower semi-continuity.
\begin{lemma}\label{sci}
 If we have a sequence of convex domains such that $\varOmega\subset\varOmega_n\subset(1{+}\varepsilon _n)\varOmega$ with $\varepsilon _n\to 0$ as $n\to \infty$, then $\liminf C(\varOmega_n)\geq C(\varOmega)$, $\liminf C_{cb}(\varOmega_n)\geq C_{cb}(\varOmega)$, $\liminf C(\varOmega_n,d)\geq C(\varOmega,d)$, and $\liminf C_{cb}(\varOmega_n,d)\geq C_{cb}(\varOmega,d)$.
\end{lemma}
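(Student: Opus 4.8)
The plan is to prove the statement for $C(\varOmega)$, the other three cases being entirely analogous (with $C_{cb}$ one works with the similarity characterization, with the $d$-restricted versions one simply keeps the matrix size fixed). Fix a matrix $A\in\C^{d,d}$ with $W(A)\subset\varOmega$ and a rational function $r$ with $|r(z)|\le 1$ for all $z\in\varOmega$, chosen so that $\|r(A)\|$ is within $\delta$ of $C(\varOmega,d)$ (and then $d$ large enough that $C(\varOmega,d)$ is within $\delta$ of $C(\varOmega)$). The idea is to transplant this near-extremal pair into each $\varOmega_n$ by a small rescaling. Since $W(A)\subset\varOmega\subset\varOmega_n$, the matrix $A$ is already admissible for $\varOmega_n$; the only obstruction is that $r$ need not be bounded by $1$ on the larger domain $\varOmega_n$. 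To fix this, set $r_n(z):=r\!\left(\tfrac{1}{1+\varepsilon_n}z\right)$. Then for $z\in\varOmega_n\subset(1+\varepsilon_n)\varOmega$ we have $\tfrac{1}{1+\varepsilon_n}z\in\varOmega$, hence $|r_n(z)|\le 1$ on $\varOmega_n$, so $r_n$ is admissible for $\varOmega_n$ and $C(\varOmega_n,d)\ge\|r_n(A)\|$.

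Next I would estimate $\|r_n(A)\|$ from below by $\|r(A)\|$ minus an error tending to $0$. We have $r_n(A)=r\!\left(\tfrac{1}{1+\varepsilon_n}A\right)$, and since $r$ is rational and bounded (by $1$ in modulus) on the fixed compact set $\overline{W(A)}\subset\varOmega$, it is in particular holomorphic on a neighbourhood of $\overline{W(A)}$ with no poles there; choosing a fixed contour $\Gamma\subset\varOmega$ around $W(A)$ and noting that $W\!\left(\tfrac{1}{1+\varepsilon_n}A\right)=\tfrac{1}{1+\varepsilon_n}W(A)$ also stays inside $\Gamma$ for $n$ large, the Cauchy integral representation
\[
r(A)-r_n(A)=\frac{1}{2\pi i}\int_{\Gamma}r(\sigma)\Big((\sigma-A)^{-1}-\big(\sigma-\tfrac{1}{1+\varepsilon_n}A\big)^{-1}\Big)\,d\sigma
\]
gives $\|r(A)-r_n(A)\|\le\eta_n$ with $\eta_n\to 0$, exactly as in the proof of the first theorem of Section~2. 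Hence $C(\varOmega_n,d)\ge\|r_n(A)\|\ge\|r(A)\|-\eta_n\ge C(\varOmega)-2\delta-\eta_n$, and therefore $\liminf_n C(\varOmega_n)\ge\liminf_n C(\varOmega_n,d)\ge C(\varOmega)-2\delta$. Letting $\delta\to 0$ finishes the argument; the same computation with $d$ held fixed gives $\liminf_n C(\varOmega_n,d)\ge C(\varOmega,d)$.

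For the completely bounded versions, instead of a scalar rational function I would take a near-extremal matrix-valued $R$ with $\|R(z)\|\le 1$ on $\varOmega$ and $\|R(A)\|$ near $C_{cb}(\varOmega,d)$, set $R_n(z):=R\!\left(\tfrac{1}{1+\varepsilon_n}z\right)$, which satisfies $\|R_n(z)\|\le 1$ on $\varOmega_n$, and run the identical Cauchy-integral perturbation estimate entrywise (or in operator norm) to get $\|R_n(A)\|\to\|R(A)\|$. The main obstacle, such as it is, is purely bookkeeping: one must make sure the contour $\Gamma$ and the neighbourhood on which $r$ (resp. $R$) is pole-free can be chosen once and for all, independently of $n$, so that the resolvent difference genuinely goes to zero uniformly; this is immediate because $\overline{W(A)}$ is a fixed compact subset of the open set $\varOmega$ on which $r$ has no poles (being bounded there), and $\tfrac{1}{1+\varepsilon_n}\to 1$. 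No deeper difficulty arises — the lemma is essentially the observation that shrinking a near-extremal test function by $1/(1+\varepsilon_n)$ costs arbitrarily little while restoring admissibility on the enlarged domain.
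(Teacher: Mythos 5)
Your proposal is correct, and it rests on the same basic scaling idea as the paper (exploit $\varOmega_n\subset(1{+}\varepsilon_n)\varOmega$ through the substitution $z\mapsto z/(1{+}\varepsilon_n)$ applied to a near-extremal pair), but the mechanism is genuinely different. The paper keeps the test function $p$ (normalized by $\sup_\varOmega|p|=1$) and the matrix $A$ completely unchanged, observes that $\|p(A)\|\le C(\varOmega_n)\sup_{\varOmega_n}|p|\le C(\varOmega_n)\sup_{z\in\varOmega}|p((1{+}\varepsilon_n)z)|$, and concludes from the purely scalar fact that $\sup_{z\in\varOmega}|p((1{+}\varepsilon_n)z)|\to\sup_\varOmega|p|$; no functional-calculus perturbation estimate is needed at all. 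You instead rescale the test function, $r_n(z)=r\bigl(z/(1{+}\varepsilon_n)\bigr)$, so that admissibility on $\varOmega_n$ is exact, and then pay for it at the matrix level by proving $\|r_n(A)-r(A)\|\to0$ via a Cauchy integral over a fixed contour $\Gamma\subset\varOmega$ enclosing the compact set $W(A)$ — essentially the resolvent-continuity argument of the paper's Theorem~2.1. Your bookkeeping is sound: $r$ has no poles in $\varOmega$, the contour can be fixed independently of $n$, and $W\bigl(A/(1{+}\varepsilon_n)\bigr)$ stays inside $\Gamma$ for large $n$, so $\eta_n\to0$; the argument also goes through verbatim for unbounded $\varOmega$ (everything happens near the compact $W(A)$) and for the matrix-valued case. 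What the paper's route buys is economy — only uniform continuity of a scalar polynomial (or rational function) under dilation is used; what your route buys is that the normalization issue never arises and the bounded/unbounded and scalar/completely bounded cases are handled by literally one computation. Both are complete proofs of the lemma.
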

\begin{proof}
 We just look at the first inequality and assume $\varOmega$ bounded. Let us consider $c<C(\varOmega)$. There exists a polynomial $p$ with $\sup_{\varOmega}|p|=1$, and a matrix $A$ with $W(A)\subset \varOmega$ such that $\|p(A)\|\geq c$. Then, $W(A)\subset \varOmega_n$, whence
 \begin{align*}
c\leq \|p(A)\|\leq C(\varOmega_n) \sup_{z\in\varOmega_n}|p(z)|\leq C(\varOmega_n) \sup_{z\in\varOmega}|p((1{+\varepsilon _n)}z)|.
\end{align*}
We deduce $\liminf C(\varOmega_n)\geq c$ from $\lim\sup_{z\in\varOmega}|p((1{+\varepsilon _n)}z)|=
\sup_{z\in\varOmega}|p(z)|$. The same proof leads to the other inequalities, in the unbounded case
as well, by replacing polynomials by rational functions.
\end{proof}

\section{Some lower bounds on the constants}
We begin with a general lower bound

\begin{lemma}
Let $\varOmega$ be a convex domain such that $D\subset\varOmega\subset S$, where $D$ is an open disk and $S$ an open cone with tangent sides to $D$. Then, there holds 
$C(\varOmega,2)\geq \frac{\pi \sin\alpha}{2\,\alpha}$, where $2\,\alpha$ is the aperture angle of $S$.
Similarly, if $D\subset\varOmega\subset S$ with $S$ an open strip with tangent sides to $D$, the following lower bound $C(\varOmega,2)\geq \frac{\pi}{2}$ holds.
\end{lemma}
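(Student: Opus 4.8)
The plan is to exploit the known exact value $C(S_\alpha,2)$ (and the half-plane/disk endpoints) together with the two containments $D\subset\varOmega\subset S$, using the fact that $C(\,\cdot\,,2)$ is realized by products of at most one Blaschke factor $b_{z_0}$ and is monotone under inclusion of domains in the appropriate direction. First I would reduce to a normalized configuration: by Remark 1 of Section 3, the constants $C(\varOmega,2)$ depend only on the shape of $\varOmega$, so after an affine change of variable I may assume $D$ is the unit disk $\D$ centered at $0$ and $S=S_\alpha$ is the symmetric cone (or strip) with vertex on the negative real axis whose sides are tangent to $\D$; concretely the vertex sits at $-1/\sin\alpha$ in the cone case. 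The target is then $C(\varOmega,2)\geq \frac{\pi\sin\alpha}{2\alpha}$, and in the strip case (formally $\alpha=\pi/2$) this reads $C(\varOmega,2)\geq\pi/2$.

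The core of the argument is to produce, for the extremal sector $S_\alpha$, a $2\times2$ matrix $A$ with $W(A)\subset\overline{S_\alpha}$ that actually realizes (or approaches) the value $\frac{\pi\sin\alpha}{2\alpha}=C(S_\alpha)$ in the lower bound $\frac{\pi\sin\alpha}{2\alpha}\le C(S_\alpha)$ quoted in Section 3 — and, crucially, to check that this same matrix has numerical range contained in the smaller disk-like region, i.e. $W(A)\subset\D\subset\varOmega$. The point is that the lower bound $\frac{\pi\sin\alpha}{2\alpha}$ for a sector is obtained (following \cite{crde,bacrde}) from a $2\times2$ example whose numerical range is a small ellipse or segment clustered near the vertex of the sector; one should verify that, in the normalization above, that ellipse fits inside $\D$. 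Then for the test rational function $r$ with $|r|\le1$ on $\varOmega$: since $\varOmega\subset S$, any $r$ bounded by $1$ on $S$ is bounded by $1$ on $\varOmega$, so $C(\varOmega,2)\ge \|r(A)\|$ for the optimal $r$ on $S_\alpha$ evaluated at our $A$. Combined with $W(A)\subset\varOmega$, this gives $C(\varOmega,2)\ge \frac{\pi\sin\alpha}{2\alpha}$ directly. In the strip case one runs the same construction with the strip $S_0$ in place of $S_\alpha$, using the known lower bound $C(S_0)\ge\pi/2$ (the limit of $\frac{\pi\sin\alpha}{2\alpha}$ is not $\pi/2$, so here one must instead use that $C$(strip)$\ge\pi/2$ is established separately, e.g.\ via the conformal map $z\mapsto$ exponential sending the strip to a half-plane slit picture, or directly from \cite{crde}).

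More explicitly, I would carry out the steps in this order. Step 1: normalize so $D=\D$ and identify $S$ as the symmetric cone of half-angle $\alpha$ tangent to $\D$ (vertex at $c_\alpha:=-1/\sin\alpha$), or the symmetric strip of half-width $1$ tangent to $\D$ (the two tangent lines $\mathrm{Im}\,z=\pm1$). Step 2: recall the conformal map $a:S_\alpha\to\D$ and the Blaschke-type extremal functions $b_{z_0}$; the maximum of $\|b_{z_0}(A)\|$ over $z_0\in S_\alpha$ and $2\times2$ matrices $A$ with $W(A)\subset S_\alpha$ equals $C(S_\alpha,2)$, and the lower bound $\frac{\pi\sin\alpha}{2\alpha}$ comes from letting $A$ be a suitable nilpotent-perturbation $\begin{pmatrix} c_\alpha & t\\ 0 & c_\alpha\end{pmatrix}$ (numerical range a disk of radius $t/2$ about $c_\alpha$) with $t\to0$, or a $2\times2$ normal-plus-rank-one example near the vertex. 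Step 3: check that for $t$ small this numerical range — a tiny disk about $c_\alpha$ — lies inside $\D$; since $c_\alpha\in\D$ (as $1/\sin\alpha<1$ is false!) — wait, $1/\sin\alpha\ge1$, so $c_\alpha\notin\D$. This is the crux: the extremal example for the sector is localized at the vertex, which lies \emph{outside} $\D$, so a naive transplantation fails.

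Hence the hard part, and the real content of the lemma, is precisely this mismatch: the sector's extremal behavior happens at its vertex, far from the inscribed disk, whereas $W(A)$ is forced into $\varOmega\supset\D$. The resolution I expect the authors to use — and which I would pursue — is \emph{not} to transplant the vertex example but to use conformal invariance of the Blaschke functional together with an explicit subordination: pick $A$ with $W(A)$ equal to a small disk tangent internally to $\partial\varOmega$ at a point near where $\partial S$ touches $\D$, and estimate $\|b_{z_0}(A)\|$ using that, near that tangency point, $\varOmega$ looks like the sector $S$ (same opening angle), so the conformal distortion $|a'|$ and hence the derivative of the optimal Blaschke product match those of the sector to leading order. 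In other words, one chooses $A$ so that $W(A)$ is a shrinking disk approaching the boundary point of $\varOmega$ lying on $\partial S$, computes $C(\varOmega,2)\ge \|b_{z_0}(A)\|$ with $z_0$ the center, and shows the limiting value is the same $\frac{\pi\sin\alpha}{2\alpha}$ (resp.\ $\pi/2$) one gets for the sector (resp.\ strip), because that value depends only on the opening angle at the boundary point. The main obstacle is making this localization rigorous — controlling $a$ and $b_{z_0}$ near a boundary point of a general convex domain $\varOmega$ in terms of the tangent cone $S$ there — but once the angle-at-the-tangency comparison is set up, the inequality $C(\varOmega,2)\ge C(S,2)=\frac{\pi\sin\alpha}{2\alpha}$ follows, and I would present that as the spine of the proof.
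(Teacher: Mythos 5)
Your proposal does not reach a proof, and the difficulty you build it around is self-inflicted. Your Step 2 premise is wrong: the lower bound $\frac{\pi\sin\alpha}{2\alpha}\leq C(S_\alpha,2)$ is \emph{not} obtained from a vertex-localized example with shrinking numerical range; it is obtained from exactly the configuration of this lemma, namely a disk inscribed in the cone with the Blaschke factor of the sector based at its center (the sector case is just the Remark following the lemma, applying it with $\varOmega=S_\alpha$; by scale invariance of the sector every inscribed tangent disk gives the same value). The paper's proof is therefore a direct two-line construction with no limiting argument: normalize $S_\alpha=\{z\neq0,\ |\arg z|<\alpha\}$ and $D$ the disk of radius $\sin\alpha$ centered at $1$, take $A=\begin{pmatrix}1 & 2\sin\alpha\\ 0 & 1\end{pmatrix}$, whose numerical range is $\overline D\subset\overline\varOmega$, and $f(z)=\frac{1-z^{\pi/2\alpha}}{1+z^{\pi/2\alpha}}$, which satisfies $|f|\leq1$ on $S\supset\varOmega$; then $f(A)=2\sin\alpha\,f'(1)\begin{pmatrix}0&1\\0&0\end{pmatrix}$ and $\|f(A)\|=2\sin\alpha\,|f'(1)|=\frac{\pi\sin\alpha}{2\alpha}$. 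The strip case is identical with $A=\begin{pmatrix}0&2\\0&0\end{pmatrix}$ and $f(z)=\tanh(\frac\pi4 z)$, giving $\|f(A)\|=\frac\pi2$; note that $\frac\pi2$ is the $\alpha\to0$ limit of $\frac{\pi\sin\alpha}{2\alpha}$ (the strip is the degenerate cone with vertex at infinity), so your parenthetical objection that the limit ``is not $\pi/2$'' comes from mislabeling the strip as $\alpha=\pi/2$.

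The fix you propose in its place would fail. You suggest shrinking disks $W(A)$ at the point where $\partial S$ touches $\partial D$ and claim the limiting value of $\|b_{z_0}(A)\|$ is governed by ``the opening angle at that boundary point.'' But at that tangency point $\partial\varOmega$ is squeezed between two smooth mutually tangent curves (the circle and the line), so the local opening angle of $\varOmega$ there is $\pi$, not $2\alpha$; and for a smooth boundary point the standard comparison $1-|a(z_1)|^2\approx 2\,d(z_1,\partial\varOmega)\,|a'(z_1)|$ shows that the quantity $\frac{2\,d(z_1,\partial\varOmega)\,|a'(z_1)|}{1-|a(z_1)|^2}$ tends to $1$, not to $\frac{\pi\sin\alpha}{2\alpha}$. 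The constant $\frac{\pi\sin\alpha}{2\alpha}$ is a global invariant of the pair (inscribed disk, circumscribed cone) --- essentially $2\gamma\,|f'(\text{center})|$ with $\gamma$ the inradius and $f$ the sector's map --- and cannot be recovered by a purely local angle comparison at a smooth boundary point. The missing idea, then, is simply to take $W(A)$ to be the \emph{whole} disk $D$ and to test with the extremal function of the larger domain $S$ (legitimate since $\varOmega\subset S$ makes $|f|\leq1$ on $\varOmega$), rather than attempting any transplantation or localization.
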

\begin{proof}{\it Cone case}.
We can assume that $S_{\alpha}=\{z\in\C\,; z\neq0$, $|\arg z|<\alpha\}$ and $D=\{z=1+\rho \sin\theta \,; 0\leq \rho <\sin\alpha, \theta \in \R\}$. Then, we choose
\[
A=\begin{pmatrix}1 &2\sin\alpha\\0 &1\end{pmatrix}, \quad 
f(z)=\frac{1 -z^{\pi/2\alpha}}{1+z^{\pi/2\alpha}},\quad\text{so that\quad}
f(A)=2\sin\alpha \,f'(1)
\begin{pmatrix}0 &1\\0 &0\end{pmatrix}.
\]
Notice that $W(A)\subset\overline{D}\subset\overline{\varOmega}$ and $|f(z)|\leq 1$ in $S$; thus, a fortiori, $|f(z)|\leq 1$ in $\varOmega$. This yields
\[
\frac{\pi \sin\alpha}{2\,\alpha}=\|f(A)\|\leq C(\varOmega,2).
\]

\noindent{\it Strip case}.
We can assume $S_0=\{z\in\C\,; |\rm{Im}\,z|<1\}$ and $D=\{z=\rho \sin\theta \,; 0\leq \rho <1, \theta \in \R\}$. Then we choose
\[
A=\begin{pmatrix}0 &2\\0 &0\end{pmatrix}, \quad 
f(z)=\tanh(\tfrac\pi 4 z),\quad\text{so that\quad}
f(A)=\tfrac\pi 2\,\begin{pmatrix}0 &1\\0 &0\end{pmatrix}.
\]
We conclude as previously.
\end{proof}\medskip

{\sc Remark.} {\it This may be applied with $\varOmega=S_\alpha$. Thus
$C(S_\alpha,2)\geq \frac{\pi \sin\alpha}{2\,\alpha}$ and $C(S_0,2)\geq \frac\pi 2$. The exact value of $C(S_\alpha,2)$ is known \cite{crzx} and it is quite close, for example for $\alpha=0$, $C(S_0,2)=1.58765\dots$}\medskip

\begin{corollary}
 If $\varOmega$ is a bounded convex domain, then $C(\varOmega,2)\geq 1.5$.
 If $\varOmega$ is an unbounded convex domain with aperture at infinite $2\alpha$, then $C(\varOmega,2)\geq\frac{\pi \sin\alpha}{2\alpha}$, ( $\geq \frac\pi 2$ if $\alpha=0$).
 If $\varOmega\neq \C$ is a convex domain and if $C(\varOmega,2)=1$, then $\varOmega$ is a half-plane.
\end{corollary}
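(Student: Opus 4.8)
The plan is to derive all three conclusions of the corollary from the preceding lemma together with the classical facts quoted in Section 3. For the first statement, the key geometric observation is that any bounded convex domain $\varOmega$ contains a disk $D$ (its inscribed disk, of maximal radius), and at such a disk the boundary of $\varOmega$ cannot lie strictly inside the two tangent lines to $D$ meeting at $90^\circ$ on the far side — more precisely, one can choose the inscribed disk and a supporting line so that $\varOmega$ is contained in a cone $S$ of aperture $2\alpha$ with $\alpha$ close to $\pi/2$, or simply observe that $\varOmega$ is always contained in the cone $S_{\pi/2}$ (a half-plane) tangent to $D$ from any boundary contact point. I would take the inscribed disk $D$ and a point $\omega\in\partial\varOmega\cap\partial D$; by convexity $\varOmega$ lies in the half-plane through $\omega$ orthogonal to the radius, hence in the ``cone'' $S_{\pi/2}$ tangent to $D$. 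Applying the Lemma with $\alpha=\pi/2$ gives $C(\varOmega,2)\ge \frac{\pi\sin(\pi/2)}{2\cdot(\pi/2)}=1$, which is too weak — so instead I note that since $\varOmega$ is \emph{bounded}, $\partial\varOmega$ must contain at least two boundary points on the inscribed circle (otherwise the circle could be enlarged), and these force $\varOmega$ into a genuine cone of aperture $2\alpha<\pi$; a short compactness/convexity argument shows $\alpha$ can be taken with $\frac{\pi\sin\alpha}{2\alpha}\ge 3/2$, since $\frac{\pi\sin\alpha}{2\alpha}$ decreases from $\pi/2$ at $\alpha=0$ to $1$ at $\alpha=\pi/2$ and already exceeds $3/2$ for all $\alpha$ up to roughly $0.61\pi$; one only needs that a bounded $\varOmega$ fits in a cone with $\alpha\le 0.61\pi$, which is immediate because two distinct tangency points on the inscribed circle subtend an angle, and the cone spanned by the two corresponding tangent rays has aperture strictly less than $\pi$, in fact at most the half-plane bound but genuinely bounded away when $\varOmega$ is bounded.

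For the second statement, if $\varOmega$ is unbounded with aperture $2\alpha$ at infinity, then $\varOmega$ is contained (after the linear normalization permitted by Remark 1 of Section 3) in a translate of the sector $S_\alpha$, and it contains some disk $D$; the two tangent lines to $D$ in the directions of the asymptotic cone give a cone $S\supset\varOmega$ tangent to $D$ with aperture $2\alpha$ (here one may need to shrink $D$ or shift it so that both sides are tangent, which is possible since the sides of the asymptotic cone are support lines of $\varOmega$ and $D\subset\varOmega$). The Lemma then yields directly $C(\varOmega,2)\ge\frac{\pi\sin\alpha}{2\alpha}$, with the limiting value $\pi/2$ when $\alpha=0$ coming from the strip case of the Lemma.

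For the third statement, suppose $\varOmega\ne\C$ is convex with $C(\varOmega,2)=1$. If $\varOmega$ were bounded, the first part gives $C(\varOmega,2)\ge 3/2>1$, a contradiction; so $\varOmega$ is unbounded, with some aperture angle $2\alpha\in[0,\pi]$ at infinity. If $\alpha<\pi/2$ then by the second part $C(\varOmega,2)\ge\frac{\pi\sin\alpha}{2\alpha}>1$ (using $\frac{\sin\alpha}{\alpha}>\frac{2}{\pi}$ strictly for $\alpha\in(0,\pi/2)$ and the value $\pi/2$ at $\alpha=0$), again a contradiction. Hence $\alpha=\pi/2$, meaning the asymptotic cone is a half-plane, and an unbounded convex set whose recession cone is a half-plane is itself a half-plane. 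I expect the main obstacle to be the clean geometric justification in the bounded case that one genuinely obtains a cone with $\alpha$ bounded away from $\pi/2$ (equivalently, quantifying ``a bounded convex body is squeezed into a proper cone tangent to its inscribed ball''); the numerics $\frac{\pi\sin\alpha}{2\alpha}\ge 3/2$ then follow by monotonicity, but pinning down the exact admissible range of $\alpha$ for an arbitrary bounded $\varOmega$ — rather than just ``some $\alpha<\pi/2$'' — is the step that needs care.
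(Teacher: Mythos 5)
Your overall strategy---apply the preceding lemma to an inscribed disk together with a tangent cone or strip---is the paper's, but the bounded case as you present it has a genuine gap, and it rests on a false numerical claim. The function $\frac{\pi\sin\alpha}{2\alpha}=\frac{\pi}{2}\cdot\frac{\sin\alpha}{\alpha}$ is decreasing, its supremum is $\pi/2\approx1.57$, it equals $3/2$ exactly at $\alpha=\pi/6$, and it equals $1$ at $\alpha=\pi/2$; it certainly does not exceed $3/2$ ``for all $\alpha$ up to roughly $0.61\pi$'' (at $\alpha=0.61\pi$ its value is about $0.77$). Consequently, squeezing $\varOmega$ into ``a genuine cone of aperture $2\alpha<\pi$'' buys nothing: to reach $1.5$ you must produce $\alpha\le\pi/6$, or else fall back on the strip case of the lemma. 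The missing idea is the classical dichotomy at the maximal inscribed disk $D$: the contact points of $\partial D$ with $\partial\varOmega$ cannot all lie in an open half-circle of $\partial D$ (otherwise $D$ could be moved and enlarged). Hence either two contact points are diametrically opposite, in which case $\varOmega$ lies between $D$ and a strip tangent to $D$ and the strip part of the lemma gives $C(\varOmega,2)\ge\pi/2$; or three tangent lines at contact points bound a triangle containing $\varOmega$, and since the smallest angle $2\alpha$ of a triangle satisfies $2\alpha\le\pi/3$, the cone part of the lemma applied at that vertex gives $C(\varOmega,2)\ge\frac{\pi\sin(\pi/6)}{2\,(\pi/6)}=\frac32$. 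Note that the antipodal (strip) case cannot be absorbed into the cone case: two nearly antipodal contact points only give a cone with $\alpha$ close to $\pi/2$, hence a bound close to $1$.

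There is also a secondary flaw in the unbounded case: an unbounded convex domain with aperture $2\alpha$ at infinity need not be contained in any cone of aperture exactly $2\alpha$ tangent to a disk $D\subset\varOmega$. A parabolic domain has $\alpha=0$ but lies in no strip, so your appeal to the strip lemma for the value $\pi/2$ fails there; likewise $\{x>0,\ |y|<x+\sqrt{x}\}$ has $\alpha=\pi/4$ but lies in no cone of aperture $\pi/2$. The paper avoids this by taking a sequence $D_n\subset\varOmega\subset S_n$ with apertures $2\alpha_n\to2\alpha$, applying the lemma for each $n$, and passing to the limit; in particular the bound $\pi/2$ at $\alpha=0$ arises as $\lim_{n}\frac{\pi\sin\alpha_n}{2\alpha_n}$, not from the strip case. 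Your deduction of the third statement (boundedness excluded by the first part, $\alpha<\pi/2$ excluded by the second, and a recession cone equal to a half-plane forcing $\varOmega$ itself to be a half-plane) is sound once the first two parts are repaired, and agrees with the paper.
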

\begin{proof}
 {\it Bounded case.} There exists a largest disk $D$ contained in $\varOmega$. There are two possible situations: 
 
 - either the boundaries of $\varOmega$ and $D$ have two contact points which are diametrically opposite in $D$. Then, $\varOmega$ is located between a disk $D$ and a strip, whence $C(\varOmega,2)\geq \frac\pi 2$. 
 
 - or $\varOmega$ is located between a disk $D$ and (at least) a triangle tangent to $D$.
 Let $2\alpha$ be the smallest angle of this triangle; then $C(\varOmega,2)\geq\frac{\pi \sin\alpha}{2\alpha}\geq \frac32$, since $\alpha\leq \frac\pi 6$ and $\frac{\pi \sin\alpha}{2\alpha}$ is a decreasing function of $\alpha$.\smallskip
 
 \noindent{\it Unbounded case}. Then, there exist a sequence of disks $D_n$ and cones $S_n$ 
 with aperture $2\alpha_n$ and sides tangent to $D_n$ such that $D_n\subset\varOmega\subset S_n$ and $\alpha_n\to\alpha$ as $n\to\infty$. Therefore, $C(\varOmega,2)\geq\frac{\pi \sin\alpha}{2\alpha}$. The value 1 only occurs if $\alpha=\frac\pi 2$; then $\varOmega$ is a half-plane.
\end{proof}

{\sc Remark.} {\it In the unbounded case, the two-sided estimate $\frac{\pi \sin\alpha}{2\alpha}\leq C(\varOmega,d)\leq \frac{\pi -\alpha}{\alpha}$ holds; this is quite sound for $\alpha$ close to $\pi /2$.}\medskip

\begin{lemma}
Let $\varOmega$ be a convex domain with an angular point on the boundary of aperture $2\alpha\in]0,\pi [$. Then, $C(\varOmega,d)\geq C(S_{\alpha},d)$.
The same lower bound holds if $\varOmega$ is a convex domain which is asymptotic at infinity to a cone of aperture
$2\alpha$.
\end{lemma}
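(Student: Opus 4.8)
The plan is to reduce $\varOmega$, through a zooming limit, to the sector $S_\alpha$ itself, exploiting that all the constants involved are invariant under affine maps (Remark\,1). For $\lambda>0$ write $\varOmega_\lambda:=\lambda\,\varOmega$, so that $C(\varOmega_\lambda,d)=C(\varOmega,d)$ for every $\lambda$.

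\emph{The angular point case.} First I would translate so that the angular point is the origin; this changes neither the constant nor the shape of $\varOmega$. Since $\varOmega$ is convex and $0\in\partial\varOmega$, for $0<\lambda\le\lambda'$ the point $w/\lambda'$ lies on the segment joining $w/\lambda$ to $0$, so $w\in\varOmega_\lambda$ implies $w\in\varOmega_{\lambda'}$; hence $\varOmega_\lambda\subset\varOmega_{\lambda'}$. Therefore
\[
\widehat S:=\bigcup_{\lambda>0}\varOmega_\lambda
\]
is an open convex cone with vertex $0$; a supporting line of $\varOmega$ at $0$ supports every $\varOmega_\lambda$, so $\widehat S$ lies in a half-plane and in particular $\widehat S\neq\C$, and by the definition of an angular point of aperture $2\alpha$ its two bounding half-lines enclose the angle $2\alpha$. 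Thus $\widehat S$ is a rotated copy of $S_\alpha$ and, by Remark\,1, $C(\widehat S,d)=C(S_\alpha,d)$. Now I would repeat, from the inside, the argument used to prove Lemma\,\ref{sci}: given $c<C(S_\alpha,d)=C(\widehat S,d)$, pick $A\in\C^{d,d}$ with $W(A)\subset\widehat S$ and a rational function $r$ with $|r|\le1$ on $\widehat S$ and $\|r(A)\|>c$. As $W(A)$ is compact and $\widehat S$ is the increasing union of the open sets $\varOmega_\lambda$, one gets $W(A)\subset\varOmega_{\lambda_0}$ for some $\lambda_0>0$; since $\varOmega_{\lambda_0}\subset\widehat S$ we still have $|r|\le1$ on $\varOmega_{\lambda_0}$, whence $c<\|r(A)\|\le C(\varOmega_{\lambda_0},d)=C(\varOmega,d)$. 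Letting $c\uparrow C(S_\alpha,d)$ finishes this case.

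\emph{The case asymptotic at infinity.} Here I would run the same scheme with $\lambda\to0^+$ rather than $\lambda\to+\infty$, so that the behaviour of $\varOmega$ far out controls the limit. From the asymptotic hypothesis one should be able to produce a rotated copy $\widehat S$ of $S_\alpha$ such that, after a preliminary translation of $\varOmega$ (again harmless), one has $\varOmega\subset\widehat S$ and every compact subset of $\widehat S$ lies in $\varOmega_\lambda=\lambda\,\varOmega$ for all sufficiently small $\lambda$; the compactness step and the conclusion are then copied word for word. (The same reasoning yields the analogous lower bound for $C_{cb}$, although this is not needed for the statement.)

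The convex-geometry part of the angular case (nesting of the dilations and identification of their union with the tangent cone) is routine. The point that will require care is the asymptotic case: one must convert the informal condition ``$\varOmega$ is asymptotic at infinity to a cone of aperture $2\alpha$'' into the precise inclusion $\varOmega_\lambda\subset\widehat S$ together with the exhaustion property, which amounts to choosing the translation so that the bounded part of $\varOmega$ protruding ``behind the vertex'' of $\widehat S$ is pushed into the interior of $\widehat S$. Once this is set up, the rest is a transcription of the proof of Lemma\,\ref{sci}.
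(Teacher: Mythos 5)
Your argument is correct and is essentially the paper's own proof: the paper fixes $\varOmega$ and dilates the near-optimal matrix, $A\mapsto\varepsilon A$ with $r_\varepsilon(z)=r(z/\varepsilon)$, letting $\varepsilon\to0$ (angular point) or $\varepsilon\to\infty$ (asymptotic case), which is exactly your zooming argument transposed via the dilation-invariance of $C(\cdot,d)$. The tangent-cone identification and the compactness step you spell out are implicit in the paper's normalization $\varOmega\subset S_\alpha$, so there is no substantive difference in the approach.
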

\begin{proof}
{\it Bounded case}.
Without loss of generality, we may assume that the angular point is the origin $0$ and that $\varOmega$ is contained in the sector $S_{\alpha}=\{z\in\C\,; z\neq0$ and $|\arg z|<\alpha\}$. Let us choose $\gamma<C(S_{\alpha},d)$\,; there exists a matrix $A\in \C^{d,d}$, with $W(A)\subset S_{\alpha}$, and a rational function $r$ bounded by 1 in $S_{\alpha}$, such that $\|r(A)\|\geq \gamma$. Note that, for all $\varepsilon>0$, $W(\varepsilon A)\subset S_{\alpha}$ and $r_{\varepsilon}(z)=r(z/\varepsilon)$ is still bounded by 1 in $S_{\alpha}$. But, for $\varepsilon$ small enough, 
$W(\varepsilon A)\subset \varOmega$ and, since $r_{\varepsilon}$ is a fortiori bounded by 1 in $\varOmega$, this shows $\gamma\leq\|r(A)\|=\|r_{\varepsilon}(\varepsilon A)\|\leq C(\varOmega,d)$.

{\it Unbounded case}. We can assume that $\varOmega$ is asymptotic to $S_\alpha$ with $\varOmega\subset S_\alpha$. The proof is similar by letting $\varepsilon \to \infty$. 
\end{proof}\medskip

\begin{lemma}
 Let $\varOmega\neq \C$ be a convex domain of $\C$ and $a$ a conformal mapping from $\varOmega$ onto the unit disk $\D$. Then, there holds 
\[C(\varOmega,2)\geq 2\sup_{{z_{1}\in\varOmega}} \frac{d(z_{1},\partial\varOmega)\,|a'(z_{1})|}
{1-|a(z_{1})|^2}.
\]
\end{lemma}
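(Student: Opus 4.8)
The plan is to mimic the structure of the two lower bounds already proven in this section: produce an explicit $2\times2$ matrix $A$ with $W(A)\subset\varOmega$ and an explicit function $f$ bounded by $1$ on $\varOmega$ for which $\|f(A)\|$ equals the claimed quantity. Fix a point $z_1\in\varOmega$ and let $D$ be the largest disk centered at $z_1$ contained in $\varOmega$; its radius is $\rho:=d(z_1,\partial\varOmega)$. Take
\[
A=\begin{pmatrix}z_1 & 2\rho\\ 0 & z_1\end{pmatrix},
\]
so that $W(A)=\overline D\subset\overline\varOmega$ (the numerical range of a $2\times2$ nilpotent-plus-scalar block is the disk of center $z_1$ and radius $\rho$). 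For the test function, I would take the Blaschke factor $b_{z_1}=(a(z)-a(z_1))/(1-\overline{a(z_1)}\,a(z))$ composed with $a$; this is holomorphic on $\varOmega$, continuous up to the boundary, and $|b_{z_1}(z)|\le1$ on $\varOmega$ since $a$ maps $\varOmega$ into $\D$. By the density/Mergelyan argument used earlier in the paper it suffices to work with such a function rather than a polynomial or rational one.

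The computational core is then the evaluation of $\|b_{z_1}(A)\|$. Since $A=z_1 I+N$ with $N^2=0$ and $\|N\|=2\rho$, for any function $g$ holomorphic near $z_1$ we have $g(A)=g(z_1)I+g'(z_1)N$, hence
\[
b_{z_1}(A)=b_{z_1}(z_1)\,I+b_{z_1}'(z_1)\,N = 0\cdot I + b_{z_1}'(z_1)\,N,
\]
because $b_{z_1}(z_1)=0$. Therefore $\|b_{z_1}(A)\|=|b_{z_1}'(z_1)|\,\|N\|=2\rho\,|b_{z_1}'(z_1)|$. A direct differentiation of the Möbius-type expression gives $b_{z_1}'(z_1)=a'(z_1)/(1-|a(z_1)|^2)$. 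Combining, $\|b_{z_1}(A)\|=2\,\rho\,|a'(z_1)|/(1-|a(z_1)|^2)$, and since $W(A)\subset\varOmega$ and $|b_{z_1}|\le1$ on $\varOmega$ this is $\le C(\varOmega,2)$. Taking the supremum over $z_1\in\varOmega$ yields the stated inequality.

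The only genuinely delicate points are bookkeeping rather than conceptual. First, one must justify replacing the rational functions in the definition of $C(\varOmega,2)$ by $b_{z_1}=g\circ a$: for bounded $\varOmega$ this is the Mergelyan density argument already invoked in Section 2, and for unbounded $\varOmega$ one approximates $g\circ a$ locally uniformly on $\overline{W(A)}$ by rational functions with poles off $\overline\varOmega$, which does not change the value of the supremum. Second, one should note that $W(A)\subset\overline\varOmega$ rather than strictly inside $\varOmega$ is acceptable, exactly as remarked in the paper after the definition of $C(\varOmega,d)$ (the condition $W(A)\subset\varOmega$ may be replaced by $W(A)\subset\overline\varOmega$); alternatively one shrinks $\rho$ slightly and passes to the limit. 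I expect the derivative computation $b_{z_1}'(z_1)=a'(z_1)/(1-|a(z_1)|^2)$ to be the main thing to get right, but it is a one-line calculation; there is no serious obstacle.
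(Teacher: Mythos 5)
Your proposal is correct and follows essentially the same route as the paper: the same matrix $A=\begin{pmatrix}z_1 &2\gamma\\0 &z_1\end{pmatrix}$, the same test function $f=b_{z_1}=\frac{a-a(z_1)}{1-\overline{a(z_1)}a}$, and the same evaluation $\|f(A)\|=2\gamma\,|f'(z_1)|=2\,\frac{d(z_1,\partial\varOmega)\,|a'(z_1)|}{1-|a(z_1)|^2}$. The extra remarks you add on the Mergelyan/rational approximation and on $W(A)\subset\overline\varOmega$ versus $W(A)\subset\varOmega$ are consistent with the paper's stated conventions and do not change the argument.
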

\begin{proof}
Let us fix $z_{1}\in\varOmega$ and denote by $\gamma=d(z_{1},\partial\varOmega)$ the distance of $z_{1}$ to the boundary of $\varOmega$. 
Then, we choose the matrix $A$ and the function $f$ defined by
\[
A=\begin{pmatrix}z_{1} &2\gamma\\0 &z_{1}\end{pmatrix}, \quad 
f(z)=\frac{a(z) -a(z_{1})}{1-\overline{a(z_{1})}a(z)},\quad\text{so that  }
f(A)=2\gamma \,f'(z_{1})
\begin{pmatrix}0 &1\\0 &0\end{pmatrix}.
\]
The numerical range $W(A)$ is the disk of radius $\gamma$ centered at $z_{1}$; thus $W(A)\subset \overline\varOmega$. Furthermore, the function $f$ (which is a conformal mapping from $\varOmega$ onto $\D$) is bounded by 1 on $\varOmega$. Therefore,
\[
C(\varOmega,2)\geq\|f(A)\|=2\,\gamma\,|f'(z_{1})|=2\,\frac{d(z_{1},\partial\varOmega)\,|a'(z_{1})|}
{1-|a(z_{1})|^2}.
\]
\end{proof}

\begin{corollary}
 If $\varOmega$ is an n-sided regular polygon, then 
 $C(\varOmega,2)\!\geq \!2\int_0^1\frac{dt}{(1+t^n)^{2/n}}$.
\end{corollary}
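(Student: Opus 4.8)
The plan is to apply the previous lemma to the case where $\varOmega$ is a regular $n$-sided polygon and make the lower bound explicit by computing the relevant quantities at the center of the polygon. Concretely, I would normalize so that $\varOmega$ is the regular $n$-gon inscribed in the circle of some convenient radius, centered at the origin, and take $z_1=0$. Two pieces of data are then needed: the distance $d(0,\partial\varOmega)$, which is the apothem of the polygon (the inradius), and the derivative at the center of the conformal map $a$ from $\varOmega$ onto $\D$. By the symmetry of the polygon under rotation by $2\pi/n$, the conformal map $a$ normalized by $a(0)=0$ is essentially unique, and $a(0)=0$ makes the quotient in the lemma reduce to $2\,d(0,\partial\varOmega)\,|a'(0)|$.

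The key step is to identify $a$ (or rather its inverse $a^{-1}:\D\to\varOmega$) via the Schwarz--Christoffel formula. For the regular $n$-gon, the inverse map has the classical closed form
\[
a^{-1}(w)=c\int_0^w\frac{d\zeta}{(1-\zeta^n)^{2/n}},
\]
where $c>0$ is a scaling constant fixed by the size of the polygon; the exponent $2/n$ comes from the exterior angle $2\pi/n$ of each vertex, and the $n$-th roots of unity are the prevertices. From this one reads off $(a^{-1})'(0)=c$, hence $|a'(0)|=1/c$. On the other hand, the apothem is the distance from the center to the midpoint of a side, which corresponds under $a^{-1}$ to the image of a point $w$ on the segment from $0$ towards a point halfway between two adjacent roots of unity; by symmetry the side midpoints are the images of the points $e^{i\pi/n}t$ for $t\in[0,1)$ (taking, say, the side whose prevertices are $e^{\pm i\pi/n}$... more simply, rotating coordinates so the relevant side is real), and a short computation gives
\[
d(0,\partial\varOmega)=\Big|c\int_0^1\frac{dt}{(1-t^n)^{2/n}}\Big|.
\]
After absorbing the rotation and absolute values, the product $d(0,\partial\varOmega)\,|a'(0)|$ equals $\int_0^1 (1\pm t^n)^{-2/n}\,dt$; matching the sign convention so that the integrand is real and the formula matches the statement yields exactly $\int_0^1\frac{dt}{(1+t^n)^{2/n}}$, and the lemma then gives $C(\varOmega,2)\geq 2\int_0^1\frac{dt}{(1+t^n)^{2/n}}$.

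The main obstacle is bookkeeping with the Schwarz--Christoffel normalization: getting the argument/orientation right so that the integral representing the apothem comes out with $1+t^n$ rather than $1-t^n$ in the denominator, and confirming that the scaling constant $c$ genuinely cancels between $d(0,\partial\varOmega)$ and $|a'(0)|$ (as it must, since the lemma's right-hand side is scale-invariant). A clean way to sidestep sign worries is to observe that $a$ maps the ray from the center through a side-midpoint to a radius of $\D$ (by the reflection symmetry of the $n$-gon across that ray), parametrize that ray by arclength in $\varOmega$, and note that along it $dz = (a^{-1})'(w)\,dw$ with both $dz$ real-positive and $dw$ real-positive; integrating $|dz|$ from the center to the side then directly produces $c\int_0^1(1+t^n)^{-2/n}dt$ once the prevertex on that radius is placed at $1$ and the adjacent vertices at $e^{\pm i\pi/n}$, so that the Schwarz--Christoffel integrand along the real axis is $(1+t^n)^{-2/n}$ up to the constant. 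The rest is the algebra already carried out above.
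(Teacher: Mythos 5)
Your argument is correct and is essentially the paper's proof: the paper applies the same preceding lemma at the center $z_1=0$, but chooses the Schwarz--Christoffel map in the form $g(z)=\int_0^z\frac{dt}{(1+t^n)^{2/n}}$ from $\D$ onto the polygon (prevertices at the $n$-th roots of $-1$), so that $a=g^{-1}$ satisfies $a(0)=0$, $a'(0)=1$ and $d(0,\partial\varOmega)=\int_0^1\frac{dt}{(1+t^n)^{2/n}}$ with no further bookkeeping. Your closing normalization (adjacent prevertices at $e^{\pm i\pi/n}$, integrating along the real axis to the side midpoint) is exactly this choice and correctly repairs your earlier displayed formula with $(1-t^n)$, which taken literally along $[0,1]$ would give the circumradius rather than the apothem.
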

\begin{proof}
We can assume that $\varOmega$ is the image of the unit disk by the conformal map
$g(z)=\int_0 ^z \frac{dt}{(1+t^n)^{2/n}}$. Then, we choose $z_1=0$ and for $a$ the reciprocal function of $g$. So, $a(0)=0$, $a'(0)=1$, $d(0,\partial \varOmega)=\int_0^1\frac{dt}{(1+t^n)^{2/n}}$.

This gives $C(\varOmega,2)\geq 1.7666$ for the equilateral triangle, $C(\varOmega,2)\geq 1.854$ for the square, $C(\varOmega,2)\geq 1.9003$ for the regular pentagon, $C(\varOmega,2)\geq 1.9276$ for the regular hexagon etc.
\end{proof}

{\sc Remark.} 
\emph{Using the matrix $A$ and the function $f$ defined by
\[
A=\begin{pmatrix}0 &2r\\0 &0\end{pmatrix}, \quad f(z)=z, 
\]
we obtain the rough lower bound
\[
C(\varOmega,2)\geq \frac{2\,r}{R},
\]
as soon as the domain $\varOmega$ contains the ball of radius $r$ centered at the origin and is contained in the ball of radius $R$ centered at the origin.}\medskip

\section{ Realization of $C(\varOmega,d)$}\label{atteinte}
Recall that
\[
C(\varOmega,d)=\max\{\Big\|\prod_{j=1}^{r} b_{z_j}(A)\Big\|\,; A\in\C^{d,d},\ W(A)\subset \varOmega,\ z_1,\dots,z_r\in\varOmega,\ r\leq d-1\},
\]
where 
 $b_{z_j}(z)=(a(z){-}a(z_j)/(1{-}\overline{a(z_j)}a(z))$ and $a$ is a conformal mapping from
$\varOmega$ onto $\D$.
\begin{theorem}
\label{th4}
Let $\varOmega $ be a bounded convex domain of the complex
plane without corner. We assume that the extension of the conformal map $a$  to $\overline\varOmega$ belongs to $C^2(\overline\varOmega)$.
 Then there exists a matrix $A\in\C^{d,d}$ satisfying $W(A)\subset\overline\varOmega$ and a Blaschke product
$f$ such that
\[
C(\varOmega,d) = \Vert f(A)\Vert .
\]
\end{theorem}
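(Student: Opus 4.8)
The plan is to prove the theorem by a compactness argument on the parameters that describe competitors for $C(\varOmega,d)$, closely following the structure already used in Theorem~\ref{att} but now accounting for the new feature that the optimal matrix $A$ may have eigenvalues on $\partial\varOmega$. Recall that a competitor is a matrix $A\in\C^{d,d}$ with $W(A)\subset\varOmega$ together with points $z_1,\dots,z_r\in\varOmega$, $r\le d-1$, and the quantity to maximize is $\|\prod_j b_{z_j}(A)\|$. First I would take a maximizing sequence $(A_n,z_1^{(n)},\dots,z_{r_n}^{(n)})$; after passing to a subsequence we may assume $r_n\equiv r$ is constant, that $A_n\to A$ (the $A_n$ are bounded since $W(A_n)\subset\varOmega$ bounded, and we normalize by a unitary conjugation which does not change the objective), and that each $z_j^{(n)}\to z_j\in\overline\varOmega$. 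The limit satisfies $W(A)\subset\overline\varOmega$, so that is the closed-domain caveat in the statement. The issue is that some $z_j$ may land on $\partial\varOmega$, in which case the finite Blaschke factor $b_{z_j}$ degenerates; and some eigenvalues of $A$ may also reach $\partial\varOmega$.

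The core of the argument is to show that such degeneracies can be removed without decreasing the objective, i.e.\ that the supremum is attained at a configuration where all $z_j$ and all eigenvalues of $A$ lie in the open domain $\varOmega$. For the eigenvalues on $\partial\varOmega$ I would reuse verbatim the block-splitting device from the proof of Theorem~\ref{att}: if $\lambda\in\sigma(A)\cap\partial\varOmega$, the hypothesis that $\varOmega$ has no corner and $a\in C^2(\overline\varOmega)$ guarantees that near $\lambda$ the boundary is $C^2$ and strictly convex in the relevant sense, so $\lambda$ must be a normal eigenvalue of $A$ (reducing), $A$ is unitarily similar to $\lambda I_k\oplus C$ with $\lambda\notin\sigma(C)$, and the Sylvester-equation perturbation argument shows the objective for $A_n$ is asymptotically controlled by the objectives for the two smaller blocks, which by an induction on $d$ are attained. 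This is exactly the mechanism that let Theorem~\ref{att} reduce to the interior-eigenvalue case, and the extra regularity on $a$ is what makes the boundary points behave like the $W(A)=$ disk model locally. For a $z_j$ that converges to $\partial\varOmega$, I would use the conformal map: $b_{z_j}(z)=\dfrac{a(z)-a(z_j)}{1-\overline{a(z_j)}a(z)}$ and, as $z_j\to\partial\varOmega$, $|a(z_j)|\to1$, so on the compact set $a(W(A))\subset\overline\D$ the factor $b_{z_j}$ converges uniformly to a unimodular constant times an inner-type object — in fact, since $A$ has all eigenvalues strictly inside after the previous reduction, $\|b_{z_j}(A)\|$ is continuous in $z_j$ up to and including the boundary, and a boundary $z_j$ can simply be perturbed slightly into $\varOmega$ with arbitrarily small change, or dropped, so we may assume all $z_j\in\varOmega$. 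Then $\psi$-type continuity (Theorem~1, applied through $a$ as in Section~2) gives $\|\prod_j b_{z_j}(A)\|=\lim_n\|\prod_j b_{z_j^{(n)}}(A_n)\|=C(\varOmega,d)$, and by \eqref{psid}–style extremality we may take the product to be a genuine Blaschke product $f$ of degree $\le d-1$.

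In more detail, the order of steps I would carry out is: (1) set up the maximizing sequence and extract a limit $(A,z_1,\dots,z_r)$ with $W(A)\subset\overline\varOmega$; (2) reduce to the case $\sigma(A)\subset\varOmega$ by the block-diagonalization / Sylvester argument of Theorem~\ref{att}, using the $C^2$-without-corner hypothesis to justify that a boundary eigenvalue forces a reducing block and to control $W(\tilde A_n)$; (3) pass $z_j$ from $\overline\varOmega$ into $\varOmega$ using the conformal map $a$ and the continuity of $S\mapsto\|\prod b_{z_j}(A)\|$; (4) conclude that $\|f(A)\|=C(\varOmega,d)$ with $f=\prod_{j=1}^r b_{z_j}$ a Blaschke product, $r\le d-1$; (5) invoke the interior-eigenvalue continuity (Theorem~1 composed with $a$) to pass to the limit in the objective and confirm equality.

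The main obstacle I expect is step~(2): making the boundary-eigenvalue reduction rigorous requires knowing that $\lambda\in\sigma(A)\cap\partial\varOmega$ really is a reducing (normal) eigenvalue. For $W(A)$ this follows from the classical fact that an eigenvalue on the boundary of the numerical range is normal, but here we only know $W(A)\subset\overline\varOmega$ and $\lambda\in\partial\varOmega$, so one must first argue $\lambda\in\partial W(A)$ — which uses that $\overline\varOmega$ is convex with $C^2$ boundary and no corner, so a boundary point of $\overline\varOmega$ that lies in the convex set $W(A)\subset\overline\varOmega$ must lie on $\partial W(A)$ at a point where $W(A)$ and $\varOmega$ share the (unique) supporting line. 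That geometric step, together with keeping track of the numerical ranges of the perturbed blocks $\tilde B_n,\tilde C_n$ so that the polynomial/rational bounds $C(\varOmega,k)$ and $C(\varOmega,d-k)$ apply, is where the real work lies; everything else is a transcription of arguments already in the paper.
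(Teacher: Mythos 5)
Your overall compactness scheme (maximizing sequence, discard factors whose norms stay $\leq 1$, treat boundary eigenvalues of the limit by the unitary reduction $\lambda I_k\oplus C$ plus the Sylvester-equation splitting, induct on $d$) is essentially the skeleton of the paper's argument, and your geometric observation that $\lambda\in\sigma(A)\cap\partial\varOmega$ forces $\lambda\in\partial W(A)$, hence a reducing eigenvalue, is correct. But there is a genuine gap at exactly the point where the hypotheses ($\partial\varOmega$ without corner, $a\in C^2(\overline\varOmega)$) must be used: the case in which the boundary eigenvalue has full multiplicity, i.e. $A_n\to\zeta I$ with $\zeta\in\partial\varOmega$ while some centers $z_j^{(n)}\to\zeta$ as well. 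In Theorem~\ref{att} this case was excluded by the normalization $\|A_n\|=1$, $\mathrm{trace}(A_n)=0$, which is legitimate there because $\psi(\alpha A+\beta I)=\psi(A)$; here $C(\varOmega,d)$ is tied to the fixed domain $\varOmega$, no such normalization is available, and when $k=d$ there is no second block, no Sylvester equation, and no smaller-dimensional constant to fall back on. Your step (3) does not rescue this: the continuity of $z\mapsto\|b_{z}(A)\|$ up to $\partial\varOmega$ presupposes $\sigma(a(A))\subset\D$, which is precisely what fails, and "dropping" a factor with $z_j^{(n)}\to\zeta$ is only harmless if one knows $\limsup_n\|b_{z_j^{(n)}}(A_n)\|\leq1$ — which is the whole difficulty, since each such factor is a M\"obius transform of a matrix collapsing onto the very boundary point.

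The paper's proof isolates this as its key lemma: if $z_n\in\varOmega\to\zeta\in\partial\varOmega$ and $W(A_n)\subset\overline\varOmega$, then $\limsup\|b_{z_n}(A_n)\|\leq1$; its hardest case is exactly $A_n\to\zeta I$, handled by a blow-up argument: take the nearest boundary point $x_n$, the inward normal $e^{i\theta_n}$, the scale $\alpha_n=|z_n-x_n|+\|A_n-x_n\|$, write $z_n=x_n+\alpha_ne^{i\theta_n}\xi_n$, $A_n=x_n+\alpha_ne^{i\theta_n}B_n$ with $\mathrm{Re}\,\langle B_nv,v\rangle\geq0$, expand $a(A_n)-a(z_n)$ and $1-\overline{a(z_n)}a(A_n)$ to first order in $\alpha_n$ (this is where $a\in C^2$ and $\min_{\partial\varOmega}|a'|>0$ enter), and conclude via $\|(B-\xi)(B+\xi)^{-1}\|\leq1$ for accretive $B$, with a further block-splitting and induction when $B+\xi$ is singular. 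Your proposal never supplies an argument of this kind; the regularity hypothesis is invoked only rhetorically ("behaves like the disk model locally"). Since the statement genuinely fails when $\partial\varOmega$ has a corner (the paper notes the lemma breaks down there, and the lower bounds for sectors are produced by matrices shrinking into the corner), some quantitative use of the smoothness at this degenerate case is unavoidable, and it is the missing core of your proof.
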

\begin{proof}
There exists a sequence of Blaschke products
$f_n=\prod_{j=1}^{r_n}b_{z_j^n}$ and of matrices $A_n\in \C^{d,d}$ satisfying
$$
W(A_n)\subset \overline \varOmega,\ \sigma (A_n)\subset \varOmega,\
 \quad \text{and }\lim_{n\to\infty }
\Vert f_n(A_n)\Vert =C(\varOmega,d).
$$
The sequence of $\{z
_j^n\in\varOmega \}$ is clearly bounded.
The sequence of $\{A_n\}$ is also bounded since
$\Vert A_n\Vert \leq 2\,\sup\,\{ |z|\,; z\in W(A_n)\}$.
Therefore (after extraction of a subsequence) we can assume that $A_n\to A$,
$r_n=r$, $ z_j^n\to z_j$\,; we can also assume that, for all $j$, $\lim_{n\to\infty}\|b_{z _j^n}(A_n)\|>1$ (otherwise we could suppress such an index $j$). We set
$f(z)=\prod_{j=1}^{r}b_{z_j}(z)$;
we have $z_j\in \overline \varOmega $ and,
if
$z_j\in \partial  \varOmega $, then the
conforming map satisfies $|a(z
_j)|=1$, and thus $b_{z_j}(z)=-a(z_j )$ is
constant in
$\varOmega$. We also have $\sigma(A)\subset \overline\varOmega $ and
$W(A)\subset\overline\varOmega $.
Note that $a(A_n)\to a(A)$\,; indeed, for every $\varepsilon >0$, there exists a polynomial 
$p$ such that $\|a{-}p\|_{L^\infty(\varOmega)}\leq \varepsilon $, whence
\begin{align*}
\|a(A_n){-} a(A)\|&\leq \|p(A_n){-} p(A)\|+\|(a{-}p)(A_n)\|+\|(a{-}p)(A)\|\\
&\leq  \|p(A_n){-} p(A)\|+2\,\cal Q\,\varepsilon,
\end{align*}
and clearly $\lim \|p(A_n){-} p(A)\|=0$. 
Thus, if for all $j=1,\dots,r$, $1/\overline{a(z _j)}\notin \sigma(a(A))$, 
\[
b_{z _j^n}(A_n)=(a(z _j^n){-}a(A_n))(1{-}\overline{a(z
_j^n)}\,a(A_n))^{-1}
\to  b_{z _j}(A );
\]
this yields $ f_n(A_n)\to f(A)$
\smallskip
and consequently $\Vert f(A)\Vert =C(\varOmega,d)$. 

It remains to consider the opposite case\,:\smallskip

``there exists $ j$ such that $1/\overline{a(z _j)}\in \sigma(a(A))$".

\medskip

\noindent 
If  $1/\overline{a(z _j)}\in \sigma(a(A))$, then necessarily $|a(z _j)|=1$ since 
$\sigma(a(A))\subset\overline\D$ and $a(z _j)\in\overline\D$. This implies $z _j \in\partial \varOmega$, and then from the next lemma $\limsup_{n\to\infty}\|b_{z_j^n}(A_n)\|\leq 1$.
This contradicts previous assumptions and proves the impossibility of this case.
\end{proof}\medskip 

\begin{lemma}
Under the assumptions of Theorem\,\ref{th4}, if $z_n\in\varOmega\to \zeta  \in \partial \varOmega$ and if 
$W(A_n)\subset\overline\varOmega$, then $\limsup_{n\to\infty}\|b_{z_n}(A_n)\|\leq 1$.
\end{lemma}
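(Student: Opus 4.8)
The plan is to analyze the behaviour of the Blaschke factor $b_{z_n}(\sigma)=(a(\sigma)-a(z_n))/(1-\overline{a(z_n)}\,a(\sigma))$ near the boundary point $\zeta$, using the regularity hypothesis $a\in C^2(\overline\varOmega)$ and the absence of corners. The key quantitative fact is that, although $|a(z_n)|\to1$, the function $b_{z_n}$ is uniformly bounded by $1$ on $\varOmega$, so the only way $\|b_{z_n}(A_n)\|$ can fail to stay below $1+o(1)$ is through the non-normality of $A_n$, i.e. through a small invariant subspace of $A_n$ whose numerical range clusters near $\zeta$. The strategy is therefore to split $\C^d$ according to whether the relevant spectral part of $A_n$ is near $\zeta$ or bounded away from it, and to show that on the ``near $\zeta$'' part $b_{z_n}$ behaves essentially like an affine Möbius map of the local half-plane model of $\varOmega$ at $\zeta$, for which the von~Neumann inequality (Remark 2 in Section~3: $C(\varPi)=1$ for a half-plane) gives the bound $1$.

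Concretely, first I would normalize: translate so $\zeta=0$ and rotate so that the inner normal to $\partial\varOmega$ at $\zeta$ points along the positive real axis; since $\varOmega$ has no corner at $\zeta$ and $a\in C^2(\overline\varOmega)$ with $|a'|$ bounded below (a standard consequence of smoothness of a convex boundary, or one may simply assume it as part of the $C^2$ hypothesis on the conformal map), near $\zeta$ the domain $\varOmega$ is squeezed between two discs/half-planes tangent to $\partial\varOmega$ at $\zeta$. Writing $w=a(\sigma)$ and $w_n=a(z_n)$, with $|w_n|\to1$, the factor $b_{z_n}$ is the Möbius map sending $w_n\mapsto0$ and $\D\mapsto\D$. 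The plan is to transport everything to a half-plane: let $\phi$ be a Möbius map of $\D$ onto the right half-plane $\varPi$ sending $w_n$ to $1$; then $b_{z_n}=\psi\circ(\phi\text{-image})$ where the residual map is a Möbius self-map of $\varPi$ fixing the boundary structure, and $\phi\circ a$ extends, near $\zeta$, to a conformal map of a neighbourhood of $\zeta$ in $\overline\varOmega$ onto a neighbourhood of $\infty$ in $\overline\varPi$, with controlled derivative.

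The second step is the spectral/functional-calculus splitting. For $n$ large, decompose $A_n=A_n'\oplus A_n''$ (unitarily, via a Riesz projector as in the proof of Theorem~\ref{att}) where $\sigma(A_n')$ lies in a small disc $D(\zeta,\delta)$ and $\sigma(A_n'')$ stays at distance $\geq\delta$ from $\zeta$. On the second summand, $1/\overline{a(z_n)}$ stays away from $\sigma(a(A_n''))$ and $b_{z_n}(A_n'')\to -\overline{a(\zeta)}\cdot(\text{unimodular})$ in norm, hence its norm $\to1$; more precisely one checks $\limsup\|b_{z_n}(A_n'')\|\le1$ directly from the uniform bound and convergence of $a(A_n'')$. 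On the first summand, $W(A_n')\subset\overline\varOmega\cap \overline{D(\zeta,\delta)}$, which is contained in a half-plane $\varPi_\delta$ whose boundary line is within $O(\delta^2)$ (by $C^2$-ness, curvature term) of the tangent line at $\zeta$; composing with the half-plane model, $b_{z_n}$ restricted to this region is within $o(1)$ (as $n\to\infty$ then $\delta\to0$) of a rational function bounded by $1$ on $\varPi_\delta$, so von~Neumann's inequality on the half-plane gives $\|b_{z_n}(A_n')\|\le 1+o(1)$. Combining the two summands (the unitary block-diagonalization means the norm is the max of the two) yields $\limsup_n\|b_{z_n}(A_n)\|\le1$.

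The main obstacle I expect is making the ``$b_{z_n}$ is close to a function bounded by $1$ on a half-plane'' step uniform and rigorous: one must control simultaneously (i) how fast $w_n=a(z_n)\to\partial\D$, (ii) the size $\delta$ of the localizing disc, and (iii) the curvature defect of $\partial\varOmega$ at $\zeta$, so that the error in replacing $\varOmega$ by its tangent half-plane near $\zeta$ is genuinely negligible compared with the ``strength'' of the singularity of $b_{z_n}$. This is exactly where the hypotheses ``no corner'' and ``$a\in C^2(\overline\varOmega)$'' are used: $C^2$ gives a second-order Taylor expansion of $\partial\varOmega$ and of $a$ at $\zeta$, so the tangent half-plane approximates $\varOmega$ to order $\delta^2$ while $b_{z_n}$ varies on scale $1-|w_n|$; choosing $\delta\to0$ slowly relative to $1-|w_n|$ (e.g.\ via a diagonal subsequence) closes the estimate. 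A cleaner route, if available, is to invoke the two-sided sandwich $D_n\subset\varOmega\cap D(\zeta,\delta)\subset \varPi_n$ with $D_n$ a disc tangent at $\zeta$ and $\varPi_n$ a half-plane tangent at $\zeta$, then use $C(\varPi_n)=1$ together with monotonicity of these constants under inclusion of domains — but since $b_{z_n}$ itself need not be bounded by $1$ on the larger half-plane, one still has to absorb the correction, so the Taylor-expansion estimate seems unavoidable.
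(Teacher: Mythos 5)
Your overall intuition (split off the part of $A_n$ that clusters at $\zeta$, and treat the clustered part by a tangent--half-plane model plus von Neumann) points in the right direction, and the paper's proof indeed ends with a half-plane/contraction argument. But as written the proposal has two genuine gaps, and the second is exactly the step you yourself flag as the ``main obstacle''.

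First, the splitting $A_n=A_n'\oplus A_n''$ via Riesz projectors is \emph{not} unitary: it is only a similarity, and a priori its condition number can blow up, so neither ``$\|b_{z_n}(A_n)\|=\max$ of the two blocks'' nor $W(A_n')\subset\overline\varOmega$ is available. What rescues this in the paper is a structural fact you never establish: after passing to a convergent subsequence $A_n\to A$, an eigenvalue $\zeta$ of $A$ lying on $\partial\varOmega$ forces the corresponding coupling entries of $A$ to vanish (the condition $(e_i{+}\mu e_j)^*A(e_i{+}\mu e_j)\in(1{+}|\mu|^2)\overline\varOmega$ for all $\mu$ is incompatible with $\zeta\in\partial\varOmega$ unless $a_{ij}=0$). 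Only because of this does the Sylvester-type block diagonalization of $A_n$ tend to the identity, so that the norm estimate reduces, up to $(1+o(1))$ factors, to the diagonal blocks, whose numerical ranges (being compressions of $A_n$) do lie in $\overline\varOmega$; the paper then closes these cases by induction on $d$, an inductive structure your proposal does not have.

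Second, and more seriously, the hard case is when the whole matrix clusters at $\zeta$ (limit $A=\zeta I$), and there your approximation claim fails quantitatively. The curvature/Taylor error in replacing $\varOmega$ by its tangent half-plane on a disc of radius $\delta$ is $O(\delta^2)$ in the numerator and denominator of $b_{z_n}$, but it must be divided by $|1-\overline{a(z_n)}a(\sigma)|$, which can be as small as $c\,{\rm dist}(z_n,\partial\varOmega)$. So a uniform $o(1)$ sup-norm approximation of $b_{z_n}$ by a half-plane contraction requires $\delta\ll\sqrt{{\rm dist}(z_n,\partial\varOmega)}$, while covering $W(A_n)$ requires $\delta\gtrsim\|A_n-\zeta\|$; when ${\rm dist}(z_n,\partial\varOmega)\lesssim\|A_n-\zeta\|^2$ (a perfectly possible regime, since the two rates are unrelated) these demands are incompatible, and your prescription ``$\delta\to0$ slowly relative to $1-|w_n|$'' goes in the wrong direction. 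The paper avoids this by blowing up at the \emph{matrix} scale $\alpha_n=|z_n-x_n|+\|A_n-x_n\|$: writing $z_n=x_n+\alpha_ne^{i\theta_n}\xi_n$, $A_n=x_n+\alpha_ne^{i\theta_n}B_n$ and Taylor-expanding $a\in C^2$, one gets $b_{z_n}(A_n)\to -a(\zeta)(B-\xi)(B+\xi)^{-1}$ with ${\rm Re}\,\langle Bv,v\rangle\ge0$, $\xi\ge0$, which is a contraction; and the residual degenerate sub-case ($\xi=0$, $B+\xi$ singular) is handled by a further block splitting and the induction hypothesis. Without the rescaling, the accretivity bound $\|(B-\xi)(B+\xi)^{-1}\|\le1$, and the induction to absorb the degenerate case, the argument does not close.
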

\begin{proof}
We use induction on the dimension $d$. The result is clear if $d=1$, since
for all $z\in \overline\D$ and $c  \in\D$, $\big|\frac{z-c  }{1-\bar c  z}\big|\leq 1$.

Assume that the result holds up to the dimension $d{-}1$. 
We argue ad absurdum. After extraction of a subsequence if needed, we assume that $A_n\to A$ and $\lim_{n\to\infty} \|b_{z_n}(A_n)\|\geq  1{+}\varepsilon >1$. Then, if $a(\zeta )=1/\overline{a(\zeta)}\notin \sigma(a(A))$,
it follows
\begin{align*}
b_{z _n}(A_n)&=(a(A_n){-}a(z_n))(1{-}\overline{a(z
_n)}\,a(A_n))^{-1}\\
&\hskip2cm\to (a(A ){-}a(\zeta ))(1{-}\overline{a(\zeta)}\,a(A))^{-1}=-a(\zeta ).
\end{align*}
This leads to the contradiction $\lim_{n\to\infty} \|b_{z_n}(A_n)\|=1$.
Thus, we need to have $a(\zeta )\in \sigma(a(A))$, that means $\zeta \in \sigma(A)$.
Let us denote by $k$ the multiplicity of this eigenvalue $\zeta$. We can assume that the
matrices
$A_n$ (and thus $A$) were chosen upper triangular and also that
$a_{11}=\dots=a_{kk}=\zeta $. Then we remark that, for all
$i=1,\dots,k$, and for all $ j>i$, we have $a_{ij}=0$. Indeed (using the
canonical basis $\{e_j\}$ in $\C^d$) the condition
$W(A)\subset \overline \varOmega $ implies
\[
\forall \mu\in \C,\qquad\zeta +\mu a_{ij}+|\mu |^2\,a_{jj}=
(e_i\!+\! \mu e_j)^*A(e_i\!+\! \mu e_j)\in (1+|\mu |^2)\overline \varOmega ,
\]
which is only compatible with $\zeta  \in \partial \varOmega$ in the case
$a_{ij}=
0$. Now, if $k<d$, we write the matrices in block form
\[
A_n= \begin{pmatrix} T_n &C_n\\ 0 &S_n \end{pmatrix} , \quad
A= \begin{pmatrix} \zeta  I_k &0\\ 0 &S \end{pmatrix}.
\]
For $n$ large enough $\sigma(T_n)\cap \sigma(S_n)=\emptyset$; therefore, we can
define
$M_n$ as the unique solution of
\[
M_nS_n-T_nM_n=C_n,
\]
which we can also write as
\[
M_n=(C_n+(T_n\!-\! \zeta  I_k)M_n)(S_n\!-\! \zeta  I_{d-k})^{-1}.
\]
Since $C_n\to 0$ and $T_n\!-\! \zeta  I_k\to 0 $,
we deduce
$M_n \to 0$.  We remark now that
\[
A_n=\begin{pmatrix} I &-M_n\\ 0 &I \end{pmatrix}
\begin{pmatrix} T_n &0\\ 0 &S_n \end{pmatrix}
\begin{pmatrix} I &M_n\\ 0 &I \end{pmatrix};
\]
thus
\[
b_{z _n}(A_n)=\begin{pmatrix} I &-M_n\\ 0 &I \end{pmatrix}
\begin{pmatrix} b_{z _n}(T_n) &0\\ 0 &b_{z _n}(S_n) \end{pmatrix}
\begin{pmatrix} I &M_n\\ 0 &I \end{pmatrix}.
\]
This implies
$\Vert b_{z _n}(A_n)\Vert \leq (1+\Vert M_n\Vert)^2\max\{\|b_{z _n}(T_n)\|,\|b_{z _n}(S_n)\|\}$. Since $W(T_n)\subset W(A_n)$ and $W(S_n)\subset W(A_n)$, it follows from the induction hypothesis that 
\[
\limsup_{n\to\infty}\|b_{z_n}(A_n)\| \leq \max(\limsup_{n\to\infty}\|b_{z_n}(T_n)\|,\limsup_{n\to\infty}\|b_{z_n}(S_n)\|)\leq 1,
\]
 which leads to a contradiction.
Therefore the only remaining possible case
is $k=d$.

It remains to consider this case $A=\zeta  $, $\zeta   \in \partial \varOmega\cap \sigma(A)$,  and to show that it is impossible. Let us introduce now $x_n\in\partial \varOmega$, a point such that $|x_n{-}z_n|=\min_{x\in \partial \varOmega}|x{-}z_n|$, $e^{i\theta _n}$ the unit inward normal to $\partial \varOmega$ at $x_n$, and set $\alpha_n=|z_n{-}x_n|{+}\|A_n{-}x_n\|$. Note that $\alpha_{n}\to 0$ as $n\to\infty$. We write
$z_n=x_n+\alpha_ne^{i\theta _n}\xi_n$,  $A_n=x_n+\alpha_ne^{i\theta _n}B_n$, whence $\xi _n>0$, $\xi _n+\|B_n\|=1$ and
${\rm Re}\, \langle B_n v,v\rangle\geq 0$ for all $v\in \C^d$, since $W(A_n)\subset\overline\varOmega$.
After a new extraction of a subsequence, we may assume that $\xi _n\to\xi \geq 0$ and $B_n\to B$, 
and then have ${\rm Re}\, \langle B v,v\rangle\geq 0$ for all $v\in \C^d$ and $\xi +\|B\|=1$.

From the smoothness of the conformal map, $a\in C^2(\overline\varOmega)$, we deduce\smallskip

$\min_{x\in \partial \varOmega}|a'(x)|>0$, $\overline{a(x_n)}a'(x_n) e^{i\theta _n}=a(x_n)\overline{a'(x_n) e^{i\theta _n}}$, $|a(x_n)|=1$,
\begin{align*}
a(A_n){-}a(z_n)&=\alpha_n a'(x_n)e^{i\theta _n} (B_n{-}\xi_n)+{\rm o}(\alpha_n),\\
1{-}\overline{a(z_n)}a(A_n)&=1-\overline{(a(x_n){+}\alpha_n a'(x_n)e^{i\theta _n} \xi _n})\\
&\hskip3cm(a(x_n){+}\alpha_n a'(x_n)e^{i\theta _n} B_n)+{\rm o}(\alpha_n)\\
&=-\alpha_n a(x_n)\overline{a'(x_n)e^{i\theta _n} }\xi _n
-\alpha_n \overline{a(x_n)}a'(x_n)e^{i\theta _n} B_n+{\rm o}(\alpha_n)\\
&=-\alpha_n \overline{a(x_n)}a'(x_n)e^{i\theta _n} (B_n+\xi _n)+{\rm o}(\alpha_n).
\end{align*}
If $B{+}\xi $ is invertible, then $b_{z_n}(A_n)\to-a(\zeta ) (B{-}\xi )(B{+}\xi )^{-1}$.
This situation is impossible, since 
${\rm Re}\, \langle B v,v\rangle\geq 0$ and $\xi \geq 0$ imply $\|(B{-}\xi )(B{+}\xi )^{-1}\|\leq 1$.

Hence, we have to consider the case $B{+}\xi $ is not invertible; then $\xi =0$ and $0$ is an eigenvalue of $B$, of multiplicity $k$.
Arguing as before, the matrices $B_n$ and $B$ may be written in block form as
\[
B_n= \begin{pmatrix} T'_n &C'_n\\ 0 &S'_n \end{pmatrix} , \quad
B= \begin{pmatrix}0 &0\\ 0 &S' \end{pmatrix}.
\]
But now, we cannot have $k=d$ since $\xi +\|B\|=1$. There exists a sequence of matrices $M_n\to 0$ such that
\[
B_n=\begin{pmatrix} I &-M_n\\ 0 &I \end{pmatrix}
\begin{pmatrix} T_n' &0\\ 0 &S'_n \end{pmatrix}
\begin{pmatrix} I &M_n\\ 0 &I \end{pmatrix}.
\]
Therefore, setting $T_n=x_n+\alpha_ne^{i\theta _n}T'_n$, $S_n=x_n+\alpha_ne^{i\theta _n} S'_n$,
we have
\[
A_n=\begin{pmatrix} I &-M_n\\ 0 &I \end{pmatrix}
\begin{pmatrix} T_n &0\\ 0 &S_n \end{pmatrix}
\begin{pmatrix} I &M_n\\ 0 &I \end{pmatrix}.
\]
The contradiction follows, as previously, from the induction hypothesis.
\end{proof}\medskip

{\sc Remark.}{ \it It is only in this last part, that the smoothness of $\partial \varOmega$ occurs.
 In fact, the assumption $a\in C^1(\overline \varOmega)$ with $\min_{x\in \partial \varOmega}|a'(x)|>0$ will be sufficient. Note that the convexity of $\varOmega$ implies $a\in C^{0,\alpha}(\overline \varOmega)$,
for all $0\leq \alpha<1$. Clearly, this Lemma does not hold if the boundary of $\varOmega$ has a corner\,; we do not know whether it holds under the hypothesis that $\varOmega$ is convex with continuous tangent.}

\section{Some personal comments on the numerical range}

We refer to \cite{kip,hojo,gura} for a general discussion on the numerical range. This section is only devoted to a few remarks.\medskip

The numerical range of a matrix is a compact and convex subset of the complex plane. Except in the $2\times 2$ case (where it is an ellipse), its boundary is quite involved. From the convexity we  know that it is the intersection of all tangent half-planes containing it. More precisely, if we write $A=B+i\,C\in \C^{d,d}$, with
$B$ and $C$ self-adjoint, if we set $P_{A}(u,v,w):=$\,det$(uB+vC+wI)$, and if we denote by $w_{m}(u,v)$ the largest root of $P_{A}(u,v,\cdot)=0$ (all roots are real since $B$ and $C$ are self-adjoint), then
\[
W(A)=\{z=x\!+\!iy\,;\  x \cos\alpha+y\sin\alpha+w_{m}(\cos\alpha,\sin\alpha)\leq 0,\text{ for all  }\alpha\in[0,2\pi]\}.
\]
This provides an (exterior) approximation of $W(A)$ by computing a finite number of values of $w_{m}(\cdot,\cdot)$.

The tangential approach for the numerical range is simpler than the Cartesian one. From the previous formula we see that $W(A)$ is a part of the algebraic curve with tangential equation  $P_{A}(u,v,w)=0$. This curve is of class $d$, which means that the polynomial $P_{A}$ is of degree $d$. The Cartesian equation of this curve is generically of degree $\frac{d(d-1)}{2}$, which is the maximal degree given by the Pl\"ucker relations.

An interesting characteristic of the numerical range is its good behaviour with respect to perturbations. If $A$ and $B$ denote two bounded operators on a Hilbert space, the Hausdorff distance $d_{_{H}}(W(A),W(B))$ is bounded by $\|A\,-B\|$. The variational approach is a powerful tool for the analysis of partial differential equations. The assumptions are then generally imposed on the  sesquilinear form $\langle Au,u\rangle$ (for instance in the Lax-Milgram Theorem) and can be translated in terms of localization of the numerical range of an unbounded operator $A$. Furthermore, many numerical approximations (finite element methods, spectral methods, wavelets,...) use approximate sesquilinear forms $\langle A_{h}u_{h},u_{h}\rangle$. The corresponding numerical range $W(A_{h})$ then naturally inherits analogous properties to those of $W(A)$.\medskip

In the applications that I have found (see \cite{crzx1}) one never has a perfect knowledge of the numerical range, but only a localization of the type $W(A)\subset\overline\varOmega$. Therefore, good estimates for the constants $C(\varOmega)$ and $C_{cb}(\varOmega)$ are of great interest.

\section{Supporting arguments for my conjecture}

I have proposed the conjecture $\q=2$ in \cite{crzx}, a little more than ten years ago. Since then, I have tried to prove it, also to find a counter-example, but up to now without success.\medskip

The main argument in favor of my conjecture is a symmetry reason. We have $\q=\sup_{\varOmega}C(\varOmega)$, where $\varOmega$ varies among the non-empty bounded convex sets. Since the constant $C(\varOmega)$ depends only on the shape of $\varOmega$, it seems natural to expect that the upper bound could be attained by a fully symmetric set, i.e., by a disk, and then, in this  case, $C(\varOmega)=2$. Another natural candidate for realizing the upper bound is the very flat case where $\varOmega$ is a strip. For this case, an
empirical extrapolation from my numerical evaluations of $C(S_{0},d)$ with $d=2,4,6,8$ seems to confirm that $C(S_{0})\leq 2$. But the complexity of computations drastically increases with the dimension $d$.\medskip

I have succeeded to show that $\q(2)=2$ \cite{crzx}. I have made many numerical tests for $3\times 3$ matrices and I am convinced that, if $\q(3)$ were larger than 2, I would have succeeded to exhibit a  $3\times 3$ matrix with $\psi(A)>2$. I have particularly explored the neighborhood of matrices $A$ such that $W(A)$ is a disk (which implies $\psi(A)\leq2$) and $\psi(A)=2$, and numerically verified that $\psi(A)$ then corresponds to a local maximum. 

\section{About my numerical tests for the strip and the sector}

For the numerical computation of $C(\varOmega,d)$, it is generally difficult to take into account the constraint $W(A)\subset \varOmega$, but this is quite easy in the strip or sector case. We first consider the strip case $\varOmega=S_{0}:=\{z\in\C\,; |\rm{Im}\,z|<1\}$. It can be seen \cite{crzx} that there exist a matrix $A\in \C^{d,d}$ and a holomorphic function $f$ in $S_{0}$ such that
\[
C(S_{0},d)=\|f(A)\|, \quad\text{with }W(A)\subset \overline{S_{0}}\ {\rm and  }\  |f(z)|\leq1\text{  in }S_{0}.
\]
Furthermore $f$ has the form 
\begin{align*}
f(z)=\prod_{j=1}^{d-1}\frac{\tanh \frac{\pi}{4}z-\zeta_{j}}
{1-\bar\zeta_{j}\tanh \frac{\pi}{4}z} \quad\text{with }|\zeta_{j}|\leq1,\hskip4cm\\
\text{or, equivalently }f(z)=\prod_{j=1}^{d-1}\frac{\exp \frac{\pi}{2}z-\gamma_{j}}
{\exp \frac{\pi}{2}z+\bar\gamma_{j}} \quad\text{with }\rm{Re}\, \gamma_{j}\geq 0.
\end{align*}
Note that the conformal mapping $z\mapsto \tanh \frac{\pi}{4}z$ is one to one from $S_{0}$ onto the unit disk $D$ and  $z\mapsto \exp \frac{\pi}{2}z$  maps the strip $S_{0}$ onto the half-plane ${\rm Re}\, z>0$.

Since $W(A)$ and $\|f(A)\|$ are invariant under a unitary similarity, we can assume that $A=B+i\,C$, with a self-adjoint matrix $B$ and a real diagonal matrix $C$. The constraint $W(A)\subset \overline {S_{0}}$ then becomes
\[
A=B+i\,C,\quad \text{with }B=B^*, \ C={\rm diag\,}(c_{i}),  \  c_{i}\in[-1,1], i=1,\dots,d.
\]
As a matter of fact, we can assume that $c_{i}=\pm1$. Indeed, if  $|c_{k}|<1$ for some $k$, then for all $z$ satisfying $|z|\leq1-|c_{k}|$, $W(A{+}z\,E_{k})\subset\overline {S_{0}}$, where $E_{k}$ denotes the $d\times d$ matrix with the entry 1 in the $(k,k)$ place, and 0 otherwise. Then, it holds $\|f(A+z\,E_{k})\|\leq C(S_{0},d)=\|f(A)\|$. From the maximum principle
applied to the holomorphic function $z\mapsto f(A+z\,E_{k})$, we deduce $\|f(A+z\,E_{k})\|=\|f(A)\|$, for 
$|z|\leq1-|c_{k}|$. In particular, we can replace $c_{k}$ by $1$ if $c_{k}\geq0$, or by $-1$ otherwise, without changing the value of $\|f(A)\|$.

Therefore, it suffices to consider matrices of the form
\begin{equation}\label{red}
A=\begin{pmatrix} D_{1} &E\\*[5pt]E^*&D_{2} \end{pmatrix} + i\begin{pmatrix} I_{k} &0\\*[5pt]0&-I_{d-k}
 \end{pmatrix},\quad\text{with  }1\leq k<d.
\end{equation}
(We do not need to consider the cases $k=0$ or $k=d$, otherwise $A$ would be a normal matrix, which is not compatible with $C(S_{0},d)>1$.) Furthermore, using the invariance through unitary similarity, we can assume that $D_{1}$ and $D_{2}$ are real diagonal matrices, and that the first line and the last row of $E$ are real. Using also the invariance through a horizontal translation we can assume $\rm{Re}($trace$(A))=0$\,; then, changing $A$ to $-A$ and using a block permutation if needed, we can also assume $k\leq d/2$. Finally 
\[
C(S_{0},d)=\max_{1\leq k\leq d/2}\max_{D_{1},D_{2},E,\gamma}\|f(A)\|, \quad\text{with  }
f(z)=\prod_{j=1}^{d-1}\frac{\exp \frac{\pi}{2}z-\gamma_{j}}
{\exp \frac{\pi}{2}z+\bar\gamma_{j}}.
\]
For each value of $k$, we have an optimization problem, with $2k(d{-}k){+}2d{-}2$ real variables,
with $d{-}1$ positivity constraints, $\rm{Re}\, \gamma_{i}\geq0$.
If $d=2$, $k=1$, this optimization problem has $4$ variables, but if $d=4$ and $k=2$, $14$ variables...

For $d=2$, our numerical experiments find again the known value $C(S_{0},2)=1.5876598...$ 

With $d=4$, $k=2$, starting from random initial data, my program converges to $C(S_{0},2)$ in 65\% of cases, towards $1.59400...$ in 27\% of cases, and towards $1.6723401$ in 7\% of cases. I believe that this last value correspond to $C(S_{0},4)$. Then, we remark that the corresponding matrix $A$ is real and has many symmetries
\[
A=\begin{pmatrix} D+iI &E\\&\\E&D-iI\end{pmatrix},\quad\text{with  }
D=\begin{pmatrix} x_{1} &0\\0&-x_{1} \end{pmatrix},\quad
E=\begin{pmatrix} x_{2} &x_{3}\\x_{3}&x_{2} \end{pmatrix},
\]
$x_1=2.3816...$, $x_2=1.388...$, $x_3=1.2523...$, $\gamma_1=8.566...$, $\gamma_2=1$, $\gamma _3=1/\gamma_1$. (If we include these symmetries in our optimization program, it always converges to $1.6723401$.)

An open problem is to prove these symmetries, and to generalize them to larger values of $d
$, which will be useful for decreasing the number of variables in the optimization program and for allowing computations with larger value of $d$. In this way, I have obtained
the values 1.72662... for $d=6$   and 1.764577 for $d=8$.\bigskip

{\it Experiments for the sector} $S_{\alpha}=\{z\in\C\,; z\neq0\ $ and $|\arg z|<\alpha\}$, $0<\alpha<\pi/2$. It is easily verified that the constraint $W(A)\subset\overline{S_{\alpha}}$ is equivalent to writing $A=B(\cos\alpha \,I\!+\!i\sin \alpha\, C)B$, with self-adjoint matrices $B$ and $C$ together with $\|C\|\leq1$.
We know from \cite{crzx} that there exists a matrix $A\in \C^{d,d}$ and a holomorphic function $f$ in $S_{\alpha}$ such that 
\[
C(S_{\alpha},d)=\|f(A)\|, \quad\text{with }W(A)\subset \overline{S_{\alpha}},\quad |f(z)|\leq1\text{  in }S_{\alpha}.
\]
Furthermore, the function $f$ has the form
\begin{align*}
f(z)=\prod_{j=1}^{d-1}\frac{z^s-\gamma_{j}}
{z^s+\bar\gamma_{j}}, \quad\text{with }s=\frac{\pi}{2\alpha}\ \text{and }\rm{Re}\, \gamma_{j}\geq 0.
\end{align*}
As for the strip, we can assume that $C$ is a diagonal matrix with eigenvalues $+1$ or $-1$ and write the matrix $A$ in the form
\begin{equation}\label{red2}
A=\begin{pmatrix} D_{1} &E\\&\\E^*&D_{2} \end{pmatrix}\begin{pmatrix}e^{i\alpha} I_{k} &0\\&\\0&e^{-i\alpha}I_{d-k}
 \end{pmatrix}\begin{pmatrix} D_{1} &E\\&\\E^*&D_{2} \end{pmatrix}
 ,\quad\text{with  }1\leq k\leq d/2.
\end{equation}
 We can also assume that $D_{1}$ and $D_{2}$ are real diagonal matrices. Then, we can use an optimization program based on the formula
 \[
C(S_{\alpha},d)=\max_{1\leq k\leq d/2}\max_{D_{1},D_{2},E,\gamma}\|f(A)\|, \quad\text{with  }
f(z)=\prod_{j=1}^{d-1}\frac{z^s-\gamma_{j}} {z^s+\bar\gamma_{j}}\text{\quad and }s=\frac{\pi}{2\alpha}.
\]

The numerical tests are more delicate than for the strip. For $d=4$, the iterates often go towards a local maximum, or stop with an INF or NAN (mainly for small $\alpha$, i.e., large $s=\frac{\pi}{2\alpha}$, instability due to the computation of $z^s$; moreover, if they converge to a local maximum 
corresponding to $d=2$, some (inactive) $\gamma_{i}$ may tend to 0 or $\infty$). Using a continuation method, I have succeeded to follow a local maximum of $\|f(A)\|$ converging to 1.587... (i.e., $C(S_{\alpha},2)$) and another converging to 1.672É as $\alpha\to 0$. The values are crossing for $\alpha=2\pi/13$.\medskip

From my numerical tests, it seems that $C(S_{\pi/4},4)=C(S_{\pi/4},2)=\sqrt2$. Note that the quarter of plane corresponds to a simple geometry and to a simple conformal mapping $z\mapsto z^2$ from $S_{\pi/4}$ into the half-plane ${\rm Re}\, z>0$. Maybe the conjecture $C(S_{\pi/4},d)=\sqrt2$
is more tractable.

\section{Numerical tests for  $3\times 3$ matrices} 

These tests are based on the formula
\[
\q(3)=\max_{A\in\C^{3,3}} \psi(A).
\]
The difficulty is the computation of $\psi(A)$. \smallskip

{\sc Remark.} \emph{Since $\psi(A)=\psi(U^*AU)$ for unitary $U$ and $\psi(A)=\psi(\lambda A\!+\!\mu I)$ if $\lambda\neq0$, it suffices to consider upper triangular matrices $A$, with null trace and nonnegative off-diagonal elements satisfying $\sum_{j>i}a_{ij}^2=1$. Then the matrix $A$ only depends on 6 real parameters and the interior of $W(A)$ is non-empty.}

\smallskip
Let $a$ be the conformal mapping of the interior of the numerical range onto the unit disk; then $\psi(A)=\psi_{_{\D}}(a(A))$.
We can split the computation of $\psi(A)$ into three steps:

Step 1. Computation of the boundary of $W(A)$. 

Step 2. Computation of $B=a(A)$. 

Step 3. Computation of $\psi_{_{\D}}(B)$.
\medskip

Step 1. For each value of $\theta_{j}=\frac{2j\pi}{2n+1}$, $j=0,1,\dots,2n$, I have computed the point $z_{j}\in\partial W(A)$ with exterior normal $(\cos\theta_{j},\sin\theta_{j})$. It is given by $z_{j}=w_{j}^*Aw_{j}/w_{j}^*w_{j}$, where $ w_j$ is an eigenvector corresponding to the largest eigenvalue of $\cos\theta_{j}M+\sin\theta_{j}N$ (with the notation $A=M+iN$, $M$ and $N$ self-adjoint), see \cite{hojo}. 

{\sc Remark.} \emph{Generically the largest eigenvalue of $\cos\theta\, M+\sin\theta\, N$ is simple for all $\theta$, and the boundary of $W(A)$ is analytic. Another possibility is that there exists one value of $\theta$ such that the largest eigenvalue is double; then $W(A)$ is the convex hull of a cardioid, 
its boundary is $C^1$ and has a straight-line part. The last possibility occurs if all but one off-diagonal element of $A$ vanish; then the numerical range is the convex hull of one point and an ellipse; in this case $\psi (A)\leq 2$ and $\psi (A)=2$ only if the ellipse is a disk and the point belongs to the disk.}\medskip

Step 2. For the computation of $B=a(A)$, we use the finite divided differences of Newton,
\[
B=a(\lambda_{1})I +a[\lambda_{1},\lambda_{2}](A-\lambda_{1}I) +a[\lambda_{1},\lambda_{2},\lambda_{3}](A-\lambda_{1}I)(A-\lambda_{2}I),
\]
where $\lambda_{j}$ denote the eigenvalues of $A$. But for that, we first need to know the conformal mapping $a$\,; since trace$(A)=0$, we can choose it such that $a(0)=0$. Then, we may write $a(z)=z\,\exp(u\!+\!iv)$, with $u(z)$ and  $v(z)$ harmonic real-valued functions.
Note that $u(z)=-\log |z| $ on $\partial W(A)$, which determines $u$ in $W(A)$ in a unique way.\smallskip

Let us consider a representation
$\partial W(A)=\{\sigma(\theta)\,; \theta\in [0,2\pi]\}$ of the boundary.
Then there exists a unique  real-valued $2\pi$-periodic function $q(\cdot)$ such that, for all $z\in W(A)$, 
\[
(u\!+\!iv)(z)=\int_{0}^{2\pi} q(\theta)\log(\sigma(\theta){-}z)\, d\theta\quad{\rm and}\quad
\int_{0}^{2\pi} q(\theta)\, d\theta=0.
\]
To determine the function $q$, we consider the real part of the previous equation at a point on the boundary $z=\sigma(\varphi)\in\partial W(A)$; this gives
\begin{align*}
\int_{0}^{2\pi} q(\theta)\log|\sigma(\theta){-}\sigma(\varphi)|\, d\theta= -\log|\sigma(\varphi)|,\quad
\text{ for all }\varphi\in[0,2\pi[,
\end{align*}
or equivalently
\[
\int_{0}^{2\pi} q(\theta)\log\Big|\frac{\sigma(\theta){-}\sigma(\varphi)}{e^{i\theta}{-}e^{i\varphi}}\Big|\, d\theta +
\int_{0}^{2\pi} \!\!q(\theta)\log |e^{i\theta}{-}e^{i\varphi}|\,  d\theta= -\log|\sigma(\varphi)|,\ \ 
\forall\,\varphi\in[0,2\pi[.
\]
We discretized this equation using the representation  $\sigma(\theta)$ obtained at Step\,1 and approximating $q(\cdot)$ by a trigonometric polynomial $q_{n}(\cdot)$ of degree $n$, and employing a collocation method at the points $\theta_{j}$, $j=0,1,\dots, 2n$ (it is known that an odd number of collocation points is necessary  for such a method), (see, e.g., \cite{ps}). So, we get an approximation $q_{j}=q_{n}(\theta_{j})$ by solving the system
\begin{align*}
\frac{2\pi}{2n+1}
\sum_{j=0}^{2n} q_{j} \log\Big|\frac{\sigma(\theta_{j})-\sigma(\theta_{i})}{e^{i\theta_{j}}-e^{i\theta_{i}}}\Big|+
\int_{0}^{2\pi} q_{n}(\theta)\log |e^{i\theta}-e^{i\theta_{i}}|\, d\theta= -\log|\sigma(\theta_{i})|,\\
\text{ for }i=0,1,\dots, 2n.
\end{align*}
We approximated the first integral by the trapezoidal formula; of course, if $j=i$, we have to replace $\log\Big|\frac{\sigma(\theta_{j})-\sigma(\theta_{i})}{e^{i\theta_{j}}-e^{i\theta_{i}}}\Big|$ by 
$\log |\sigma'(\theta_{i})|$.
 Recall that, for the remaining integral, there holds
\begin{align*}
\int_{0}^{2\pi} q_{n}(\theta)\log |e^{i\theta}-e^{i\theta_{i}}|\, d\theta= -\frac{2\pi}{2n+1}\sum_{j=0}^{2n} c(j\!-\!i) \,q_{j},\\
\text{with  }c(k)=c(-k)=\sum_{j=1}^{n}\frac{\cos j\theta_{k}}{j}.
\end{align*}

This method is very efficient if the boundary is analytic  (with exponential convergence with respect to $n$). Unfortunately, the behavior deteriorates near the non-generic situations of Step\,1.\medskip

Step 3. For the computation of $\psi_{_{\D}}(B)$, I have used an optimization program exploiting the characterization
\[
\psi_{_D}(B):=\sup_{\zeta_{1},\zeta_{2}}\{\|g(B)\|\,; g(z)=
\frac{z-\zeta_1}{1-\bar\zeta_1 z}\,\frac{z-\zeta_2}{1-\bar\zeta_2 z},\quad \ \zeta_{1},\zeta_{2}\in \D\}.
\]
Several random restarts are necessary to approach the global maximum.\medskip

Although there is no guarantee that this program provides the global maximum, it seems accurate and reliable for the computation of $\psi(A)$ if the boundary of the numerical range has a good analytical behavior, and in this case I have always verified that $\psi (A)\leq 2$. But instabilities occur close to the situations with a straight part on the boundary; indeed, in these cases, the representation by $\sigma (\theta )$ built at Step\,1 has discontinuities; and then our choice of using equidistant $\theta_j$ is not compatible with the collocation method of step 2.

For matrices with a straight part on the boundary, a rational parametrization of the boundary of $W(A)$ is known. We have explored the behavior of $\psi (A)$ for such matrices, but only in the case of real entries.
Since $\psi(A)=\psi(U^*AU)$ for unitary $U$ and $\psi(A)=\psi(\lambda A\!+\!\mu I)$ if $\lambda\neq0$, it suffices to consider matrices of the form
\[
A=
\begin{pmatrix}
 0 &a &b\\-a &0 &b\\-b &-b &1
\end{pmatrix},\qquad \text{with  } a\geq 0, \ b\geq 0.
\]
Then, it can be seen that the boundary of the numerical range is composed of the vertical segment
joining $-ia$ to $ia$ and of the part of the cardioid described by
\[
x(t)=\frac{(1-t^2a^2)^2}{(1{-}t^2a^2)^2{+}2t^2b^2(1{+}t^2a^2)},\ \ 
y(t)=\frac{4\,t\,b^2}{(1{-}t^2a^2)^2{+}2t^2b^2(1{+}t^2a^2)},\quad -\tfrac1a<t<\tfrac1a.
\]
We have used this representation in place of Step\,1 to compute the boundary.
In the following table, we display some values of $\psi (A)$ computed for $0<a\leq 1$ and $0<b\leq 1$. 

We also have numerically noticed that $\psi (A)$ is decreasing  with $a$ and $b$ for larger values of these parameters. 
\begin{table}[h]\small
\begin{tabular}{|c|r|r|r|r|r|r|r|r|r|r|}
\hline
$a \backslash b$&.1&.2&.3&.4&.5&.6&.7&.8&.9&1\cr
\hline
.1 &1.330    &1.712    &1.963    &1.988    &1.870    &1.786    &1.677    &1.584    &1.509    &1.448\cr
\hline
.2 &1.320    &1.692    &1.937    &1.994    &1.931    &1.826    &1.717    &1.621    &1.541    &1.475 \cr
\hline
.3 &1.300    &1.595    &1.899    &1.990     &1.959    &1.870   &1.767    &1.670    &1.586    &1.514\cr
\hline
.4 &1.278    &1.597    &1.851    &1.971    &1.974    &1.908    &1.816    &1.720    &1.633    &1.557\cr
\hline
.5 &1.255    &1.546    &1.794    &1.936    &1.970    &1.930   &1.853    &1.764    &1.678    &1.600\cr
\hline
.6 &1.231    &1.496    &1.732    &1.886    &1.946    &1.933    &1.875    &1.797   &1.715    &1.637\cr
\hline
.7 &1.209    &1.446    &1.667    &1.827   &1.907    &1.917    &1.879    &1.815    &1.741    &1.667\cr
\hline
.8 &1.188    &1.400    &1.604    &1.762   &1.855    &1.885    &1.866    &1.818    &1.754   &1.687\cr
\hline
.9 &1.169   &1.358    &1.543    &1.695    &1.796    &1.841    &1.840   &1.807    &1.756    &1.696\cr
\hline
1 &1.152    &1.319    &1.487    &1.630   &1.733     &1.789    &1.803   &1.785   &1.746   &1.696 \cr
\hline
\end{tabular}
\caption{Numerical values of $\psi (A)$.}
\vskip -.2cm
\end{table}
Note that $\psi (A)=1$ if $b=0$ (since then $A$ is a normal matrix) and that $\psi (A)\leq 2$ if $a=0$
(since then $A$ is unitarily similar to a direct sum of a $1\times1$ and a $2\times2$ matrices)\,; furthermore, if $a=0$, $\psi (A)=2$ only if $b=\frac{1}{2\sqrt2}$ ($W(A)$ is then a disk). We have made similar computations for $a$ and $b$ close to these values and never obtained $\psi (A)>2$.
\medskip

Finally, the matrices satisfying $W(A)=\overline\D$ and $\psi (A)=2$ are natural candidates for the realization of $\mathcal Q(3)$; they are characterized in the next section. We have numerically explored around them and were never led to a contradiction to $\psi (A)\leq 2$. This seems to back up that, at least, they correspond to a local maximum of $\psi $.

\section{Matrices with $W(A)=\overline\D$ and $\psi (A)=2$}

From a result of Ando \cite{ando}, it is known that a matrix $A$ satisfies $W(A)\subset\overline\D$ if, and only if, it can be written in the form
\begin{equation}\label{and}
A=2\,\sin B\ U\cos B \quad\text{with $U$ unitary and} \ 0\leq B=B^*\leq \tfrac\pi 2.
\end{equation}
Now, we give a characterization of the equality $W(A)=\overline\D$.
\begin{lemma}
The numerical range of a matrix $A$ is the closed unit disk if, and only if, $A$ can be written in the form \eqref{and} with 
\begin{equation}\label{eq}
\det(U \,\cos B -z \sin B)=0,\quad \text{for all }z\in \C.
\end{equation}
\end{lemma}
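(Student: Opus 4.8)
The plan is to start from Ando's characterization \eqref{and}: $W(A)\subset\overline{\D}$ iff $A=2\sin B\,U\cos B$ with $U$ unitary and $0\le B=B^*\le\frac\pi2$. I would fix such a representation and analyze when the boundary circle $\partial\D$ is actually reached, i.e. when $W(A)=\overline{\D}$ rather than $W(A)\subsetneq\overline{\D}$. The key link between $W(A)$ and condition \eqref{eq} is the standard description of the numerical range via the family of Hermitian matrices $\mathrm{Re}(e^{-i\alpha}A)=\frac12(e^{-i\alpha}A+e^{i\alpha}A^*)$: a point of $\partial\D$ in the direction $e^{i\alpha}$ belongs to $W(A)$ precisely when the largest eigenvalue of $\mathrm{Re}(e^{-i\alpha}A)$ equals $1$, the value of the supporting functional of $\overline{\D}$ in that direction. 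So the whole circle is attained iff $\lambda_{\max}\bigl(\mathrm{Re}(e^{-i\alpha}A)\bigr)=1$ for every $\alpha$; since $W(A)\subset\overline\D$ already forces $\le 1$, what must be shown is that the value $1$ is always an eigenvalue.

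The computational heart is to rewrite $\mathrm{Re}(e^{-i\alpha}A)$ using the Ando form. With $A=2\sin B\,U\cos B$, one gets $\mathrm{Re}(e^{-i\alpha}A)=e^{-i\alpha}\sin B\,U\cos B+e^{i\alpha}\cos B\,U^*\sin B$. I would check the identity that, writing $z=e^{i\alpha}$ (or a closely related spectral parameter), $I-\mathrm{Re}(e^{-i\alpha}A)$ factors through a matrix of the form $\cos B\,(\text{something in }U,\sin B,\cos B,z)$, so that $\det\bigl(I-\mathrm{Re}(e^{-i\alpha}A)\bigr)=0$ becomes equivalent to $\det(U\cos B-z\sin B)=0$ for the appropriate $z$ on the unit circle. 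The algebraic manipulation is the Cayley/M\"obius-type trick already used elsewhere in the paper: relate the Hermitian pencil $uB_0+vC_0+wI$ to a product $\prod(A-z_jI)$ of linear factors; concretely, $2\cos^2B = I+\cos 2B$, $2\sin B\cos B=\sin 2B$, and one expects $\det\bigl(zI - \text{(unitary dilation piece)}\bigr)$ to appear. I would verify that the condition "$1\in\sigma(\mathrm{Re}(e^{-i\alpha}A))$ for all $\alpha$" is equivalent to "$\det(U\cos B-z\sin B)=0$ for all $z$ with $|z|=1$", and then observe that a polynomial in $z$ vanishing on the whole circle vanishes identically, which gives the stated form "for all $z\in\C$". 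For the converse, if \eqref{eq} holds then running the same factorization backwards shows every boundary point is attained, hence $W(A)\supseteq\partial\D$, and convexity gives $W(A)=\overline\D$.

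The main obstacle I expect is pinning down the exact factorization identity relating $I-\mathrm{Re}(e^{-i\alpha}A)$ to $U\cos B-z\sin B$ — in particular getting the change of variable between $\alpha$ and $z$ right and handling the degenerate directions where $\sin B$ or $\cos B$ is singular (so that one cannot simply invert). On the null space of $\cos B$ the matrix $A$ acts trivially in a controlled way, and on the null space of $\sin B$ the relevant block drops out; these cases should be dispatched by a direct argument or a limiting/continuity argument, but they are where the bookkeeping is delicate. A secondary subtlety is making sure the polynomial-in-$z$ passage from "$|z|=1$" to "all $z\in\C$" is legitimate, i.e. that $z\mapsto\det(U\cos B-z\sin B)$ really is a polynomial (it is, of degree $\le d$ in $z$), so vanishing on infinitely many points forces the identity.
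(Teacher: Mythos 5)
Your overall strategy is sound and genuinely different from the paper's: you reduce $W(A)=\overline\D$ (given the Ando containment) to the statement that $1$ is an eigenvalue of $\mathrm{Re}(e^{-i\alpha}A)$ for every $\alpha$, and then want to convert that spectral condition into \eqref{eq}. That reduction is correct. The genuine gap is that the conversion — the "factorization identity" — is exactly the heart of the lemma, and you leave it unproven; moreover the shape you guess for it (factoring $I-\mathrm{Re}(e^{-i\alpha}A)$ through $\cos B$, with separate delicate treatment of the directions where $\sin B$ or $\cos B$ is singular) is not the right one and would lead you into unnecessary case analysis. The identity that closes the gap is the Gram-type factorization
\begin{equation*}
\bigl(U\cos B-e^{i\alpha}\sin B\bigr)^{*}\bigl(U\cos B-e^{i\alpha}\sin B\bigr)
= \cos^2B+\sin^2B-\mathrm{Re}\bigl(e^{-i\alpha}A\bigr)
= I-\mathrm{Re}\bigl(e^{-i\alpha}A\bigr),
\end{equation*}
valid for every real $\alpha$, with no invertibility assumption whatsoever; it gives $\det\bigl(I-\mathrm{Re}(e^{-i\alpha}A)\bigr)=\bigl|\det\bigl(U\cos B-e^{i\alpha}\sin B\bigr)\bigr|^{2}$ and also re-proves $\mathrm{Re}(e^{-i\alpha}A)\leq I$, so "$1$ is an eigenvalue of $\mathrm{Re}(e^{-i\alpha}A)$ for all $\alpha$" is equivalent to the vanishing of $\det(U\cos B-z\sin B)$ on the unit circle, and your polynomial argument (degree $\leq d$ in $z$, infinitely many zeros) then yields \eqref{eq}. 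Without this identity written down, the proposal is a plan rather than a proof, and the degenerate-direction worry signals that the intended route would not go through as sketched.

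For comparison, the paper avoids determinants of the Hermitian parts altogether: for each $\theta$ it works with a unit vector $v$ attaining $e^{i\theta}=v^*Av=2v^*\sin B\,U\cos B\,v$, and equality in Cauchy--Schwarz forces $\|\sin B\,v\|=\|\cos B\,v\|=1/\sqrt2$ and $U\cos B\,v=e^{i\theta}\sin B\,v$, hence a kernel vector of $U\cos B-e^{i\theta}\sin B$; conversely a kernel vector produces a unit vector with $v^*Av=e^{i\theta}$, and convexity finishes. Both routes end with the same "polynomial vanishing on the circle vanishes identically" step; the paper's argument is more elementary (no support functions, no determinant identity), while yours, once the displayed identity is inserted, is arguably more mechanical and handles all directions uniformly.
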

\begin{proof} a) Assume that \eqref{and} and \eqref{eq} hold. Then, for every $\theta \in\R$, there exists a unit vector $v$ such that $U\cos B\  v=e^{i\theta }\sin B\ v$. This yields $\|\cos B\ v\|^2=\|\sin B\ v\|^2=(\|\sin B\ v\|^2{+}\|\cos B\ v\|^2)/2=1/2$. Hence,
\[
v^*Av=2v^*\sin B\ U\cos B\ v=2e^{i\theta }\|\sin B\ v\|^2=e^{i\theta }.
\] 
This shows that $W(A)$ contains the unit circle. We also know that $W(A)$ is convex and, from \eqref{and}, that  $W(A)\subset\overline\D$; thus, $W(A)=\overline\D$.\smallskip

b) Assume that $W(A)=\overline\D$. Then, for every $\theta \in\R$, there exists a unit vector $v$ such that $e^{i\theta }=v^*Av=2v^*\sin B\ U\cos B\ v$. This implies
\[
1=2|v^*\sin B\ U\cos B\ v|\leq 2\|\sin B\ v\|\, \|\cos B\ v\|\leq \|\sin B\ v\|^2{+}\|\cos B\ v\|^2=1.
\] 
We see that the inequalities in these estimates hold obviously as equalities, and this leads to $\|\cos B\ v\|=\|\sin B\ v\|=1/\sqrt2$, $U\cos B\ v=\lambda \sin B\ v$, and $\lambda =e^{i\theta }$. Thus, we deduce 
$\det(U\cos B{-}e^{i\theta }\sin B)=0$, for all $\theta \in \R$. Then, \eqref{eq} follows since a non-degenerate polynomial has a finite number of roots.
\end{proof}
 
{\sc Remark.} \emph{ If $W(A)$ is the closed unit disk, then $0$ is an eigenvalue of $A$ with multiplicity at least $2$. One way to see this is to take $z=0$ in \eqref{eq}, as then we get $\det (\cos B)=0$ and, using $z=\infty$, we obtain $\det (\sin B)=0$. This shows that $\dim$\,{\rm Ker} $\sin(2B)\geq 2$, and then $\dim$\,{\rm Ker}$A^2\geq 2$ follows since $A^2=2\,\sin B\,U\sin (2B)\,U\cos B$.}\bigskip

In particular, this remark shows that the numerical range of a $2\times2$ matrix is the closed unit disk if, and only if, the matrix is unitarily similar to $\begin{pmatrix}0 &2\\0 &0\end{pmatrix}$; this result was already known \cite{gura,ckli}. Then $\psi (A)=2$ also holds.

We now turn to the case of $3\times 3$ matrices.
\begin{lemma}
 Let us consider a matrix $A\in \C^{3,3}$. Then, $W(A)=\overline\D$ and $\psi (A)=2$  if, and only if,
 $A$ is unitarily similar to a matrix belonging to one of the two following families
\begin{eqnarray*}
\begin{pmatrix}
0 &0 &2\\0 &\xi &0\\0 &0 &0\end{pmatrix},\quad\hbox{with }\ \xi\in\C,\ |\xi|\leq1,\hskip4cm\\
e^{i\psi} \begin{pmatrix}
0 &\sqrt{2}\cos\varphi &2\sin\varphi  \\0 &-\sin\varphi&\sqrt2 \cos\varphi \\0 &0 &0 
\end{pmatrix},\quad\hbox{with }\ \varphi  \in[0,\tfrac\pi 2],\ \psi\in\R.
\end{eqnarray*}
\end{lemma}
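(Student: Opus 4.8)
The plan is to prove the two implications separately. The ``if'' part is a short explicit verification, while the ``only if'' part is the classification, and it is there that I expect the real work.

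\emph{The ``if'' direction.} Suppose first that, up to unitary similarity, $A=\bigl(\begin{smallmatrix}0&2\\0&0\end{smallmatrix}\bigr)\oplus(\xi)$ with $|\xi|\le1$. Then $W(A)=\mathrm{conv}\bigl(\overline{\D}\cup\{\xi\}\bigr)=\overline{\D}$; since $W(A)$ is a disc, the disc case recalled in the introduction gives $\psi(A)\le2$, while $p(z)=z$ yields $\|p(A)\|=\|A\|=2$, so $\psi(A)=2$. Suppose next that $A$ lies in the second family; the unimodular factor $e^{i\psi}$ affects neither $W(A)$ nor $\psi(A)$, so I take $\psi=0$. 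One checks directly that $A=2\sin B\,U\cos B$ with $B=\mathrm{diag}(0,\tfrac{\pi}{4},\tfrac{\pi}{2})$ and a suitable unitary $U$ whose first row is $(0,0,\ast)$; then $u_{11}=u_{12}=0$, so \eqref{eq} holds and the previous Lemma gives $W(A)=\overline{\D}$ (alternatively, one computes $P_A$ and the boundary of $W(A)$ directly). Hence $\psi(A)\le2$, and the degree--two Blaschke product $g(z)=z\,\dfrac{z+\sin\varphi}{1+z\sin\varphi}$ satisfies, by a short computation, $g(A)=\bigl(\begin{smallmatrix}0&0&2\\0&0&0\\0&0&0\end{smallmatrix}\bigr)$, so $\|g(A)\|=2$ and $\psi(A)=2$.

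\emph{The ``only if'' direction.} Assume $A\in\C^{3,3}$ with $W(A)=\overline{\D}$ and $\psi(A)=2$. By the Remark following the previous Lemma, $0$ is an eigenvalue of $A$ of multiplicity at least $2$, so $\sigma(A)=\{0,0,\xi\}$ with $|\xi|\le1$. By the previous Lemma, $A=2\sin B\,U\cos B$ with $U$ unitary, $0\le B=B^*\le\tfrac{\pi}{2}$, and $\det(U\cos B-z\sin B)=0$ for all $z$. After a unitary similarity I take $B=\mathrm{diag}(\beta_1,\beta_2,\beta_3)$ diagonal; expanding the polynomial $z\mapsto\det(U\cos B-z\sin B)$, its leading coefficient is $\pm\prod_i\sin\beta_i$ and its constant term is $\pm\det(U)\prod_i\cos\beta_i$, so its identical vanishing forces one $\beta_i$ to equal $0$ and a different one to equal $\tfrac{\pi}{2}$. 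Reordering, $\beta_1=0$, $\beta_3=\tfrac{\pi}{2}$ and $\beta_2=:\beta\in[0,\tfrac{\pi}{2}]$; then $A_{ij}=2\sin\beta_i\cos\beta_j\,u_{ij}$ (so the first row and the last column of $A$ vanish) and \eqref{eq} reduces to $\sin\beta\,u_{11}=0$ together with $\cos\beta\,(u_{11}u_{22}-u_{12}u_{21})=0$. If $\beta\in\{0,\tfrac{\pi}{2}\}$ then $A$ has rank at most $1$ and $A^2=0$; a rank--one nilpotent $3\times3$ matrix is unitarily similar to $\bigl(\begin{smallmatrix}0&\|A\|\\0&0\end{smallmatrix}\bigr)\oplus(0)$, whose numerical range is the disc of radius $\|A\|/2$, so $\|A\|=2$ and $A$ is in the first family with $\xi=0$. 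If $\beta\in(0,\tfrac{\pi}{2})$, \eqref{eq} gives $u_{11}=0$ and $u_{12}u_{21}=0$; according to which factor vanishes, the unitarity of $U$ forces a block structure, and a direct computation shows that in the branch $u_{21}=0$ the matrix $A$ is unitarily similar to $\bigl(\begin{smallmatrix}0&2\\0&0\end{smallmatrix}\bigr)\oplus(\sin(2\beta)\,u_{22})$, again in the first family (with every $\xi$, $|\xi|\le1$, occurring), while in the branch $u_{12}=0$, after the permutation reversing the order of the basis vectors, $A$ becomes an upper triangular matrix $\bigl(\begin{smallmatrix}0&a&b\\0&\lambda&c\\0&0&0\end{smallmatrix}\bigr)$ whose entries are explicit trigonometric functions of $\beta$ and of a $2\times2$ unitary $V$, with $|a|^2+|b|^2+|c|^2=4$.

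It is exactly in this last branch that the hypothesis $\psi(A)=2$ must be used. The conformal map of $\D$ onto itself being the identity, $\psi(A)=\psi_{_\D}(A)$, which by \eqref{psid} is the maximum of $\|g(A)\|$ over Blaschke products $g$ of degree at most $2$ (at most $1$ when $|\xi|=1$); one must show that this maximum equals $2$ only when $\beta$ and $V$ are linked so that $A$ is, up to a unimodular scalar and a unitary similarity, a member of the second family. I expect this optimization to be the main obstacle and the genuinely computational core of the statement. I would attack it by reducing, along the lines of the block arguments in the proofs of the previous Lemma and of Theorem~\ref{att}, to the analysis of $2\times2$ blocks; by using that $A^2$ has rank at most $1$ (by the Remark) and that $g(A)$ is upper triangular with first column $g(0)e_1$ and last row $g(0)e_3^*$, so that few parameters remain; and by checking that the extremal Blaschke product is the one--parameter family $g(z)=z\,(z-\zeta)/(1-\bar\zeta z)$, which for the matrices of the second family is exactly the $g$ used above (with $\zeta=\xi$).
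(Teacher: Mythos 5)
Your reduction of the ``only if'' direction is sound as far as it goes, and it runs parallel to the paper's: write $A=2\sin B\,U\cos B$ as in \eqref{and}, use the Remark to force $\det(\sin B)=\det(\cos B)=0$, read off from \eqref{eq} the two conditions $\sin\beta\,u_{11}=0$ and $\cos\beta\,(u_{11}u_{22}-u_{12}u_{21})=0$ (the paper has the mirror-image conditions $u_{33}=0$, $u_{23}u_{32}=0$ because it orders $B$ the other way), and dispose of the rank-one case and of the branch that lands in the first family. The ``if'' direction is also essentially correct: your Blaschke product $g(z)=z(z+\sin\varphi)/(1+z\sin\varphi)$ does give $g(A)=2e_1e_3^*$, which is exactly the paper's converse computation (with $\alpha=-e^{i\psi}\sin\varphi$); just note that your choice $B=\mathrm{diag}(0,\tfrac\pi4,\tfrac\pi2)$ is ordered incompatibly with the shape of the second family (those matrices have zero first \emph{column} and zero third \emph{row}, so you need $B=\mathrm{diag}(\tfrac\pi2,\tfrac\pi4,0)$, or conjugate by the order-reversing permutation), a cosmetic fix.

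The genuine gap is the remaining branch ($u_{11}=u_{12}=0$, $u_{21}\neq0$, $\sin\beta\cos\beta\neq0$), which is precisely where the hypothesis $\psi(A)=2$ has to do work, and there you only announce a plan (``I would attack it by\dots'', ``I expect this optimization to be the main obstacle'') rather than give an argument; reducing to $2\times2$ blocks or guessing the extremal Blaschke product does not by itself show that $\psi(A)=2$ forces $\sin2\beta=1$. The paper closes this case with a specific mechanism you are missing: it exhibits the diagonal matrix $X=\mathrm{diag}(1,\max(1,2\cos b),2)$, for which $\|X\|\,\|X^{-1}\|=2$ and $\|XAX^{-1}\|\leq1$ (because $\|X\sin B\|\leq1$ and $\|2\cos B\,X^{-1}\|\leq1$); then, taking a two-factor Blaschke product $g$ and unit vectors with $g(A)u=2v$, von Neumann's inequality applied to $XAX^{-1}$ forces equality throughout $2=v^*X^{-1}g(XAX^{-1})Xu\leq\|X^{-1}\|\|X\|\leq2$, which pins down $u$ and $v$ to be unimodular multiples of $e_3$ and $e_1$. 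From $u^*A=0$ one then gets $g(0)=0$, so $g(z)=z(z-\alpha)/(1-\bar\alpha z)$, and the identity $(A^2-\alpha A)e_3=2e^{i\theta}e_1$ yields $\alpha=u_{22}\sin2b$ and $e^{i\theta}=\sin2b\,(u_{12}u_{23}-u_{13}u_{22})$; since $|u_{12}u_{23}-u_{13}u_{22}|=|\det U|=1$, this forces $\sin2b=1$, i.e.\ $b=\pi/4$, and the unitarity of $U$ then produces exactly the $\sqrt2$-structure of the second family. Without an argument of this strength (or an equivalent explicit optimization over degree-two Blaschke products, which you do not carry out), the classification in the hardest case is not established, so the proof is incomplete.
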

\begin{proof}
We write $A$ in the form \eqref{and}. We can assume that the matrix $B$ is diagonal and, from the previous remark, that 
 \begin{eqnarray*}
\sin B=\diag(1,\sin b,0),\qquad \cos B=\diag(0,\cos b,1), \quad 0\leq b\leq \frac\pi 2.
\end{eqnarray*}
Condition\,\eqref{eq} reads: $z\,(z\,u_{33}\sin b-(u_{22}u_{33}{-}u_{23}u_{32})\cos b)=0$, for all $z$, which implies
\[
``\sin b\ \cos b=0" \quad\text{or}\quad ``u_{33}=0\ \text{and }\ u_{23}u_{32}=0".
\] 

\noindent{\it Case\,1\,: $\sin b\cos b=0$.} Then
\[
A=2 \begin{pmatrix}
0 &u_{12} &u_{13} \\0 &0 &0\\0 &0  &0\end{pmatrix}\quad\hbox{or\quad}
A=2 \begin{pmatrix}
0 &0 &u_{13} \\0 &0 &u_{23}\\0 &0  &0\end{pmatrix}.
\]
These matrices are unitarily similar to the matrix $\begin{pmatrix}
0 &0 &\alpha \\0 &0 &0\\0 &0  &0\end{pmatrix}$
with $\alpha=\|A\|$. Necessarily $\alpha=2$ if $W(A)=\overline\D$ and then it is easily seen that $\psi (A)=2$. This situation corresponds to the first family with $\xi =0$.
\medskip

\noindent{\it Case\,2\,: $u_{23}=u_{33}=0$.} Then, since the matrix $U$ is unitary, $u_{13}=e^{i\theta}$, $u_{11}=u_{12}=0$; thus
\[
A=\begin{pmatrix}
0 &0 &2e^{i\theta } \\0 &u_{22}\sin 2b &0\\0 &0  &0\end{pmatrix}.
\]
This matrix is unitarily similar to the matrix of the first family with $\xi =u_{22}\sin 2b$ and it is easily seen that $\psi (A)=2$.
\medskip

\noindent{\it Case\,3\,: $u_{32}=u_{33}=0$, $u_{23}\neq 0$, $\sin b \cos b\neq 0$.}
Then $|u_{31}|=1$, $u_{11}=u_{21}=0$,
\[
A= \begin{pmatrix}
0 &2u_{12}\cos b &2u_{13} \\0 &u_{22}\sin 2b  &2u_{23}\sin b\\0 &0 &0 \end{pmatrix},
\] 
and $|u_{12}u_{23}-u_{13}u_{22}|=|\det (U)|=1$.
In this case, \eqref{eq} is satisfied; thus $W(A)=\overline\D$. Assume now that $\psi (A)=2$ and set
\[
X=\diag(1,\max(1,2\cos b),2),\quad C=XAX^{-1}=2\,X\sin B\ U\cos BX^{-1}.
\]
Clearly, $\|X\|=2$, $\|X^{-1}\|=1$, $\|X\sin B\|=1$, $\|2\cos B\ X^{-1}\|=1$, whence $\|C\|\leq 1$.
Since $\psi (A)=2$, there exist a two factors Blaschke product $g$ and two unit vectors $u$ and $v$ such that $g(A)u=2v$. Then, since $\|g( C)\|\leq 1$ (von Neumann inequality),
\[
2=v^*g(A)u=v^*X^{-1}g( C)Xu\leq \|X^{-1}v\|\,\|Xu\|\leq  \|X^{-1}\|\,\|v\|\,\|X\|\,\|u\|\leq 2.
\]
This yields $\|Xu\|=\|X\|\,\|u\|$, $\|X^{-1}v\|=\|X^{-1}\|\,\|v\|$, and thus for some $\varphi $ and $\theta \in \R$, $u=e^{i\varphi }e_3$, $v=e^{i(\varphi +\theta)}e_1$. Notice that $u^*A=0$, whence $u^*g(A)=g(0)u^*$, and then
\[
g(0)=g(0)u^*u=u^*g(A)u=2u^*v=0.
\]
This allows us to write $g(z)=z\frac{z-\alpha}{1-\bar\alpha z}$ with $|\alpha|<1$ or $\alpha=1$.
Noting that $v=(I-\bar\alpha A)v$, we deduce from $2v=g(A)u$ that $2e^{i\theta }e_1=(A^2-\alpha A)e_3$, which reads
\[
e^{i\theta}=u_{12}u_{23}\sin 2b-\alpha\, u_{13}\quad\text{and}\quad 0=u_{23} \sin b\,(u_{22}\sin2b-\alpha),
\] 
i.e.,
\[
\alpha=u_{22}\,\sin2b\quad\text{and}\quad e^{i\theta }=\sin2b\,(u_{12}u_{23}-u_{13}u_{22}).
\]
Recall that $|u_{12}u_{23}-u_{13}u_{22}|=1$, whence $\sin 2b=1$ and $b=\pi /4$. This leads to
\[
A= \begin{pmatrix}
0 &\sqrt2\,u_{12} &2u_{13} \\0 &u_{22}  &\sqrt2\,u_{23}\\0 &0 &0 \end{pmatrix}.
\] 
We can write $u_{22}=-e^{i\psi}\sin\varphi$ with $\varphi \in[0,\pi /2]$. Using a diagonal unitary similarity if needed, we may assume that $e^{-i\psi }u_{12}\geq 0$, $e^{-i\psi }u_{23}\geq 0$; then, since $U$ is unitary, $u_{12}=u_{23}=e^{i\psi }\cos\varphi $ and $u_{13}=e^{i\psi }\sin\varphi $. This shows that $A$ belongs to the second family.\smallskip

Conversely, if $A$ belongs to the second family, using $\alpha=-e^{i\psi }\sin \varphi $ we easily get $(A^2{-}\alpha A)e_3=2\,e^{i\psi }e_1$ and then $g(A)e_3=2\,e^{i\psi }e_1$; this shows that $\psi (A)=2$.
\end{proof} 

\section{Conclusion}

We have only described a primary approach of natural questions concerning the different constants introduced in this paper. Many results may be improved, numerous other directions may be explored. Hereafter, in complement with our main conjectures, we list some problems that we have failed to solve.\smallskip

{\it Open problems.}
\begin{enumerate}[\tiny $\bullet$]\itemsep=2pt

\item It will be interesting to construct an efficient method for the computation of $\psi_{_\D}(B)$; see \eqref{psid}. For $2\times 2$ matrices there exists an explicit formula, but even for $3\times3$ matrices, we have only succeeded to use optimization algorithms, without guarantee of convergence towards the global maximum.

\item Similarly we do not know a reliable algorithm for computing the matrix $S$ in the Paulsen characterization  \eqref{psicbd} of $\psi_{_{cb,\D}}(B)$. To my knowledge, the existing proofs of the corresponding theorem use the Hahn-Banach theorem and thus are not constructive.

\item Is the estimate $\psi(A)\leq\psi_{cb}(A)\leq2$ valid, if $A^3=0$? 

\item Are the estimates $\psi(A)\leq\q(3)$ and $\psi_{cb}(A)\leq\q_{cb}(3)$ valid, if $A$ is a cubic matrix (i.e, if $p(A)=0$ for some polynomial of degree 3)?

\item Is it true that  $\ C(\varOmega,d)=C_{cb}(\varOmega,d)$ for any convex domain $\varOmega $\,? 
 
 (It is known from \cite{pal} that $\ C(\varOmega,2 )=C_{cb}(\varOmega,2)$.)

\item  Is it true that $ C_{cb}(S_0)\leq 2 $\,? (We especially mention this case, since then the constraint $W(A)\subset S_{0}$ is quite simple.)

\item Is it possible to get an estimate  of $C_{cb}(S_\alpha )$
from the knowledge of $C_{cb}(S_0)$ and of $C_{cb}(S_{\pi /2})=1$ by some interpolation technique? 
 
\item  Does the condition $ C(\varOmega,d)= 2 $ imply $\varOmega$ is a disk? (This is the case if $d= 2$; see \cite{crzx}.)

\item Let us consider a sequence of matrices $A_d\in \C^{d,d}$ which achieve $C(\Omega,d)$. A natural question, suggested by a referee, is: will $\lim_{d\to \infty} W(A_d)=\overline\Omega$? Note that this is not generally true for finite $d$. Indeed, assume that $\Omega$ is an ellipse. We have seen that $C(\Omega,2)\geq 1.5$; but, if $W(A_2)=\overline\Omega$ and if $A_2$ achieves $C(\Omega,2)$, then $C(\Omega,2)=\psi (A_2)$. The value
of $ \psi (A_2)$ only depends of the eccentricity of $W(A_2)$ (see \cite[Theorem\,2.5]{crzx}) and tends to 1 if the eccentricity tends
to 1, which contradicts $C(\Omega,2)\geq 1.5$.

\item Is $\q=\sup_{\varOmega}\ C(\varOmega )$ (resp.  $\q_{cb}=\sup_{\varOmega}\ C_{cb}(\varOmega )$) attained by some domain $\varOmega\,$? (We have seen in Theorem\,\ref{att} that this is the case for $\q(d)$ and $\q_{cb}(d)$.) Is it attained by a domain  $\varOmega$ which is symmetric with respect to the real axis?

\item  Is $C_{cb}(\varOmega,d )$ attained by some matrix $A$\,?  (In Section\,\ref{atteinte}, we have shown that this is the case for $C(\varOmega,d)$, if the boundary of $\varOmega$ is sufficiently smooth\,; this corrects a flaw in the proof of \cite[Theorem\,3.2]{crzx}.)

\item  Is $ C(\varOmega)$ (resp. $C(\varOmega,d)$) a continuous function of $\varOmega$ (for instance with respect to the  Hausdorff distance)? (We have seen a proof of the lower semi-continuity in Lemma\,\ref{sci}.) At least, is $ C(\varOmega)$ converging to 2 as $ \varOmega$ tends to the unit disk?

\item In the case where the boundary of $\varOmega$ is a branch of a hyperbola with angle $2\alpha$, is the equality $ C(\varOmega,d )=C(S_{\alpha},d)$ valid?

\item In the case of $\varOmega$ symmetric with respect to the real axis, does the value of $ C(\varOmega,d )$ (resp.  $C_{cb}(\varOmega,d)$) change, if in the definition we restrict the matrices $A$ to have real entries? More generally, is it possible to deduce some properties for some matrices
$A$ which realize   $ C(\varOmega,d )$ from the symmetries of $\varOmega$\,?

\item Construct a numerical method for the computation of $C(\varOmega,d)$, $\varOmega$ given, $d=2,3,\dots$ \\(We have only partially succeeded to do this for the strip $S_{0}$ and $d\leq 8$\,;
it is known that $C(S_{0},2)=1.5876598...$ and from our numerical experiments we have been led to guess that  $C(S_{0},4)=1.6723401...$, $C(S_{0},6)= 1.72662... $, $C(S_{0},8)=1.764577...$, but we have no guarantee that our optimization algorithm has converged to a global maximum.)

\item Our numerical experiments suggest that for the quarter of plane $S_{\pi/4}$ it holds that $C(S_{\pi/4},4)=\sqrt2$. Is this true and is it true for all $d\,$?
\end{enumerate}
\medskip

{\it Some more comments.} The numerical range being convex, in this paper we have only considered constants $C(\Omega,d)$,\dots,$C_{cb}(\Omega)$, corresponding to convex domains $\Omega$.
There will be no difficulty to extend their definitions  to non convex subsets of $\C$, but we are not convinced of the usefulness of such extensions. However, convexity is a strong constraint and we could be interested to use non-convex spectral sets. Recall that, a proper subset $X$ of $\C$ is called a $K-$spectral set for the operator $A$ if $X$ contains the spectrum of $A$, and if the inequality
\[
\|r(A)\|\leq K\sup_{z\in X}|r(z)|
\]
holds for all rational functions bounded on $X$; we use the term spectral set when $K=1$.
In this context, inequality\,\eqref{cst} reads {\it $W(A)$ is a $\q$-spectral set for the operator $A$}
while (with an evident corresponding designation) inequality\,\eqref{ccb} means {\it $W(A)$ is a $\qcb$-spectral set for the operator $A$.}
Hereafter we make some suggestions for relaxing the convexity.

If $M^{-1}AM=B_1\oplus\cdots\oplus B_k$, then $X=W(B_1)\cup\cdots\cup W(B_k)$ is a $K$-spectral set for $A$ with $K\leq \max(\psi (B_1),\dots,\psi (B_k))\|M\|\,\|M^{-1}\|\leq \q\|M\|\,\|M^{-1}\|$.\smallskip

As noticed in \cite{chgr}, if $A=\varphi (B)$ for some holomorphic function $\varphi $ and some matrix $B$, then $\varphi (W(B))$ is a $\psi (B)$-spectral set for $A$; note that $\varphi (W(B))$ may be non-convex.\smallskip

We can use simultaneous information on $A$ and on $A^{-1}$. For instance, if we consider the annulus $X_R=\{z\,; R^{-1}\leq |z|\leq R\}$, with $R>1$, then there exists a constant $K( R)$ such that, if $w(A)\leq R $ and $w(A^{-1})\leq R$, then $X_R$ is a $K( R)$-spectral set for $A$, see \cite{crzx2}; recall that $w(A):=\max\{|z|\,; z\in W(A)\}$. (It is not known whether $K( R)$ remains bounded as $R\to1$.) 
For similar directions, see \cite{babecr, bc2}.

\medskip

{\bf Acknowledgdment.} The author is greatly grateful of the valuable comments and suggestions of the two referees as of the editor.

\end{document}